\newcounter{claim-counter}
\theoremstyle{plain}
\newtheorem{thm}{Theorem}[section] 
\newtheorem{cor}[thm]{Corollary}
\newtheorem{lem}[thm]{Lemma}
\newtheorem{lemma}[thm]{Lemma}
\newtheorem{prop-defi}[thm]{Definition \& Proposition}
\newtheorem{prop}[thm]{Proposition}
\newtheorem*{thm*}{Theorem}
\newtheorem*{prop*}{Proposition}
\newtheorem*{cor*}{Corollary}
\declaretheorem[style=theorem,name={Theorem}]{theoremletter}
\theoremstyle{definition}
\newtheorem{defi}[thm]{Definition}
\newtheorem{rem}[thm]{Remark}
\newtheorem*{claim*}{Claim}
\newcommand{\NN}{{\mathbb N}}
\newcommand{\ZZ}{{\mathbb Z}}
\newcommand{\BB}{{\mathbb B}}
\newcommand{\RR}{{\mathbb R}}
\newcommand{\CC}{{\mathbb C}}
\newcommand{\MM}{{\mathbb M}}
\newcommand{\Z}{{\mathcal Z}}
\newcommand{\R}{{\mathcal R}}
\renewcommand{\H}{\mathscr{H}}
\renewcommand{\L}{{\mathscr L}}
\newcommand{\G}{{\mathscr G}}
\newcommand{\htens}{\bar{\otimes}}
\newcommand{\id}{\operatorname{id}}
\renewcommand{\d}{\operatorname{d}\hspace{-0.03cm}}
\newcommand{\T}{{\mathscr{T}}}
\renewcommand{\leq}{\leqslant}
\renewcommand{\i}{{\operatorname{i}}}
\renewcommand{\S}{{\mathcal{S}}}
\renewcommand{\restriction}{\rvert}
\newcommand{\Sp}{{\operatorname{Sp}}}
\newcommand{\Linfty}{\operatorname{L^\infty}}
\newcommand{\rS}{\mathrm{S}}
\newcommand{\Cdot}{\raisebox{-0.80ex}{\scalebox{2.2}{$\cdot$}}}
\title[Measure equivalence for non-unimodular groups]{Measure equivalence for non-unimodular groups}
\author{Juhani Koivisto, David Kyed and Sven Raum}
\address{Juhani Koivisto, Department of Mathematics and Computer Science, University of Southern Denmark, Campusvej 55, DK-5230 Odense M, Denmark}
\email{juhanialex.koivisto@gmail.com }
\address{David Kyed, Department of Mathematics and Computer Science, University of Southern Denmark, Campusvej 55, DK-5230 Odense M, Denmark}
\email{dkyed@imada.sdu.dk}
\address{Sven Raum,
Department of Mathematics, Stockholm University, SE-106 91 Stockholm, Sweden}
\email{raum@math.su.se}
\subjclass[2010]{22D05, 28D05, 57M07, 20F65}
\keywords{Locally compact groups, amenability, measure equivalence, orbit equivalence}
\begin{document}

\begin{abstract}
We undertake a comprehensive study of measure equivalence between general locally compact, second countable groups, providing operator algebraic and ergodic theoretic reformulations, and complete the classification of amenable groups within this class up to measure equivalence.

\end{abstract}

\maketitle

\section{Introduction}
The notion of measure equivalence for discrete groups was  invented by Gromov \cite{gromov-asymptotic-invariants} as a measure theoretic analogue of quasi isometry, and in the works of Furman  \cite{furman-gromovs-measure-equivalence, Furman-OE-rigidity}, it was used  to prove strong rigidity results for lattices in higher rank simple Lie groups. In recent years, there has been an increasing focus on ergodic theoretic aspects of non-discrete groups \cite{bowen-hoff-ioana, BFS-integrable, carderi-le-maitre, KPV, bader-rosendal, BDV17} and the present paper provides a contribution to this  programme by analyzing the role of measure equivalence in the context of locally compact, second countable groups.
The case of unimodular groups was  treated in \cite{KKR17}, but for non-unimodular groups it has been an open problem to localize a good definition of measure equivalence. Although Deprez and Li \cite{deprez-li-ME} proposed a definition of measure equivalence between non-unimodular groups, it remained open whether or not this notion is compatible with stable orbit equivalence.  Further, measure equivalence was shown to preserve the Haagerup property and weak amenability in \cite[Theorem 6.1]{deprez-li-ME}, but its behaviour with respect to amenable groups was not clarified, although the concise classification of amenable groups up to measure equivalence  is a distinct feature of this notion for unimodular groups. In the present paper, we first show that the definition of measure equivalence suggested in \cite{deprez-li-ME} has the properties that one would expect a good notion of measure equivalence to have, in that it allows for the passage to strict, free and ergodic couplings (see Proposition \ref{prop:strict-free-couplings}). Secondly, we show that it connects well with other established equivalence relations among locally compact  groups, by means of the following theorem whose discrete version appeared in \cite[Theorem 3.3]{Furman-OE-rigidity}.

\begin{theoremletter}[see {Theorem \ref{thm:me-and-soe-equivalence}}]
For non-discrete, locally compact, second countable groups $G$ and $H$ the following are equivalent.
\begin{itemize}
\item[(i)] $G$ and $H$ are measure equivalent.
\item[(ii)] $G $ and $H $ admit  orbit equivalent, essentially free, ergodic,  probability measure preserving actions on standard Borel probability spaces.
\item[(iii)] $G$ and $H$ admit essentially free, ergodic probability measure preserving actions on standard Borel probability spaces for which the cross section equivalence relations associated with some choice of cross sections are stably orbit equivalent.
\end{itemize}\end{theoremletter}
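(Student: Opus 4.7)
The strategy is to use \emph{cross section equivalence relations} as a bridge between measure equivalence of lcsc groups and orbit equivalence of countable Borel equivalence relations. Taking a cross section of an essentially free ergodic pmp action of an lcsc group produces a countable pmp equivalence relation whose SOE class should turn out to be an invariant of the group up to ME. With this machinery in hand, the plan is to close the cycle $(i) \Rightarrow (ii) \Rightarrow (iii) \Rightarrow (i)$.

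For $(i) \Rightarrow (ii)$, start with an ME coupling $(\Omega, m)$ and apply Proposition \ref{prop:strict-free-couplings} to pass to a strict, free, ergodic coupling. The quotients $\Omega/H$ and $\Omega/G$ then carry naturally defined essentially free ergodic pmp actions of $G$ and $H$ respectively, after renormalizing on fundamental domains. A Borel fundamental domain $F_H \subset \Omega$ for the $H$-action is canonically Borel isomorphic to a fundamental domain $F_G$ for the $G$-action via the orbit maps; this isomorphism intertwines the two orbit equivalence relations and, after a modular-function rescaling of the measures, is measure preserving, witnessing OE.

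The implication $(ii) \Rightarrow (iii)$ is direct: an OE $\theta \colon X \to Y$ of essentially free ergodic pmp actions maps cross sections of $G \curvearrowright X$ to Borel subsets of $Y$ which can be intersected with cross sections of $H \curvearrowright Y$ to produce positive-measure Borel subsets on which the two cross section relations are isomorphic, hence SOE. For $(iii) \Rightarrow (i)$ one suspends: the SOE witness between the cross section relations, lifted through the transverse flows of $G$ and $H$, reconstructs a standard Borel space with commuting free $G$- and $H$-actions admitting finite-measure fundamental domains, which is precisely an ME coupling in the Deprez--Li sense.

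The main obstacle throughout is the careful handling of the non-unimodular case: the modular functions of $G$ and $H$ enter into the measure renormalization on fundamental domains in $(i) \Rightarrow (ii)$ and into the transverse measures in the suspension construction of $(iii) \Rightarrow (i)$, and the Deprez--Li coupling measure convention---incorporating the modular twist---is what makes these constructions internally consistent. The non-discreteness and non-compactness hypotheses are needed to ensure that cross sections exist and carry non-trivial transverse measures, a standing requirement of the cross section machinery.
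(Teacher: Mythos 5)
Your overall architecture (pass to a strict free ergodic coupling, use cross sections as the bridge, close the cycle (i)$\Rightarrow$(ii)$\Rightarrow$(iii)$\Rightarrow$(i)) matches the paper in spirit, but each implication as you sketch it contains a genuine gap. For (i)$\Rightarrow$(ii): for non-discrete $G$ and $H$ a Borel transversal for the $H$-action on $\Omega$ is a \emph{null} set, so there is no ``finite-measure fundamental domain'' to renormalize on, and, more seriously, the map you propose from $F_H$ to $F_G$ via $G$-orbits is not a bijection --- a single $G$-orbit meets $F_H$ in countably many points (this is exactly the cross section equivalence relation), so the orbit map is countable-to-one, not an isomorphism. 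Even for discrete groups this step is false: ME only yields SOE there, with a nontrivial compression constant. The paper's actual route is through Feldman--Hahn--Moore: the coupling groupoid $(G\times H)\ltimes\Omega$ is isomorphic to both $\T\times(G\ltimes X)$ and $\T\times(H\ltimes Y)$, so the two action groupoids are \emph{similar}, and similarity coincides with isomorphism for groupoids with uncountable orbits (\cite[Corollary 5.8]{feldman-hahn-moore}); this is where non-discreteness is genuinely used (via $\T\simeq\Z\times\T$ the compression gets absorbed), not merely to guarantee existence of cross sections as you suggest. Your (ii)$\Rightarrow$(iii) has the same problem in miniature: the image $\theta(X_0)$ of a cross section and a cross section $Y_0$ of the target are both null sets, so their intersection is generically empty and carries no transverse measure; the paper instead uses $\G\simeq\T\times\R_{X_0}$, $\H\simeq\T\times\R_{Y_0}$ to deduce similarity of $\R_{X_0}$ and $\R_{Y_0}$ and then invokes \cite[Theorem 3]{FM1}.

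The step (iii)$\Rightarrow$(i) is where the real work lies and your ``suspension through transverse flows'' is not a proof: you do not explain how to produce the two commuting actions on a common space, nor how to verify the Deprez--Li condition (which is not ``finite-measure fundamental domains'' but the existence of equivariant isomorphisms $\Omega\simeq G\times Y\simeq H\times X$ with finite invariant measures in the classes of $\mu$ and $\nu$). The paper first reduces (iii) to (ii) by the groupoid computation $G\ltimes X\simeq\R_{X_0}\times\T\simeq\R_{X_0}\times\Z\times\T\simeq\R_{Y_0}\times\T\simeq H\ltimes Y$, and then proves (ii)$\Rightarrow$(i) by dualizing the groupoid isomorphism to an isomorphism of $\Linfty$-algebras, defining the four commuting actions $\Cdot$ and $\triangleright$ on $G\times X$ and $H\times Y$, and applying Mackey's point realization theorem to obtain the coupling space $\Omega=G\times X_1$; the invariance of the probability measures is then checked explicitly using the cocycle identity $g.y=\Delta(g.\Delta^{-1}(y))$. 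None of this is recoverable from the sketch as written, so the proposal should be regarded as an outline of the correct strategy with the three decisive steps still missing.
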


A theorem of Furman \cite{furman-gromovs-measure-equivalence},  building on the works of Ornstein-Weiss \cite{ornstein-weiss} and Connes-Feldman-Weiss \cite{Connes-Feldman-Weiss}, shows that all countably infinite amenable groups are measure equivalent and in \cite{KKR17} the authors extended this to the class of unimodular, amenable, locally compact, second countable groups. Utilizing the deep classification results for injective factors \cite{connes-type-III-classification, con76, uffe-bicentralizer}, we now complete the classification of amenable,  locally compact, second countable groups up to measure equivalence by proving  the following theorem.

\begin{theoremletter}[{see Theorem \ref{thm:ME-classification-of-amenable}}]
The class of amenable, locally compact, second countable groups is stable under measure equivalence and consists of the following three measure equivalence classes: compact groups, non-compact unimodular amenable groups and non-unimodular amenable groups.
\end{theoremletter}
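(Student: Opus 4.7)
The plan is to establish three facts: amenability is a measure equivalence invariant among lcsc groups, the three proposed classes are pairwise disjoint, and each of them consists of a single ME class. With Theorem A in hand, the first two steps are largely conceptual, while the crucial work lies in the third.

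For the invariance of amenability, I use Theorem A to pass to orbit equivalent, essentially free, ergodic, probability measure preserving actions of $G$ and $H$, under the assumption that both groups are non-compact and non-discrete; the corner cases are handled separately via the compact/discrete dichotomy. Amenability of $G$ implies Zimmer amenability of any free ergodic p.m.p.\ action of $G$, and Zimmer amenability is an orbit equivalence invariant. Since the action of $H$ is essentially free, amenability of the orbit equivalence relation forces $H$ itself to be amenable. Pairwise disjointness of the three classes follows from the fact that both compactness and unimodularity are ME invariants: a compact group cannot be ME to a non-compact one without violating covolume conditions on one side, and the modular function of $G$ can be recovered from any Deprez-Li coupling, so unimodular groups are not ME to non-unimodular ones.

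The bulk of the argument is in showing that each of the three classes forms a single ME class. Any two compact groups $K_1, K_2$ are ME via the coupling $K_1 \times K_2$ equipped with normalized Haar measure and coordinate-wise translation actions. The non-compact unimodular amenable case is precisely the main classification result of \cite{KKR17} and may be invoked directly. The crucial case is the non-unimodular amenable one: for any such group $G$ I construct a free ergodic p.m.p.\ action whose cross-section equivalence relation is amenable of type $\mathrm{III}_1$. Connes's classification of injective factors together with Haagerup's uniqueness theorem for the injective $\mathrm{III}_1$ factor then imply that all such cross-section equivalence relations are mutually orbit equivalent, and the implication (iii)$\Rightarrow$(i) of Theorem A upgrades stable orbit equivalence of cross-sections to a measure equivalence of the groups themselves.

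The principal obstacle lies in the non-unimodular case, and specifically in producing the correct cross-section for an arbitrary non-unimodular amenable group. The Radon-Nikodym cocycle of the cross-section equivalence relation is governed by the modular function of $G$, whose image in $\RR_+$ depends on the particular group, so one must select the action with care to guarantee that the essential range of the modular cocycle is all of $\RR_+$, i.e., that the cross-section relation is of type $\mathrm{III}_1$ irrespective of the modular function. Once this type-theoretic matching is in place, the uniqueness of the amenable ergodic $\mathrm{III}_1$ equivalence relation closes the argument, while ME invariance of amenability and the compactness/unimodularity separations assemble the three-class classification.
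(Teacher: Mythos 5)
Your overall architecture (ME-invariance of amenability, disjointness of the three classes via invariance of compactness and unimodularity, and a single ME class within each) matches the paper, and the first two steps are sound. The genuine gap is in the non-unimodular case: you claim that for an \emph{arbitrary} non-unimodular amenable $G$ one can choose a free ergodic pmp action whose cross-section equivalence relation is of type $\mathrm{III}_1$, ``irrespective of the modular function''. This is impossible whenever $\Delta_G(G)$ is not dense in $\RR_+$, i.e.\ when $\Delta_G(G)=s_0^{\ZZ}$ for some $s_0\neq 1$ (such groups exist, e.g.\ $\RR\rtimes_{s_0}\ZZ$ or the subgroup of the $ax+b$-group with dilation part $s_0^{\ZZ}$). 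For \emph{any} pmp action $G\curvearrowright(X,\mu)$ the dual weight $\tilde\varphi$ on $\Linfty(X)\rtimes G$ satisfies $\Sp(\Delta_{\tilde\varphi})=\overline{\Delta_G(G)}$, so
\[
\rS\bigl(L(\R_{X_0})\bigr)=\rS\bigl(\BB(\ell^2(\NN))\htens L(\R_{X_0})\bigr)=\rS\bigl(\Linfty(X)\rtimes G\bigr)\subseteq \overline{\Delta_G(G)}\cup\{0\}= s_0^{\ZZ}\cup\{0\},
\]
and the cross-section relation is therefore at best of type $\mathrm{III}_{\lambda}$ with $\lambda=\min(s_0,s_0^{-1})$, never $\mathrm{III}_1$. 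The essential range of the modular (Radon--Nikodym) cocycle of $\R_{X_0}$ is pinned down by $\overline{\Delta_G(G)}$ and cannot be enlarged by a cleverer choice of pmp action, so the ``type-theoretic matching'' you rely on cannot be arranged within a single group.

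The paper circumvents this in two steps. First, Lemma \ref{lem:same-range-implies-ME} shows that two non-unimodular amenable groups with $\overline{\Delta_G(G)}=\overline{\Delta_H(H)}$ are ME: taking mixing pmp actions, one gets $\rS(L(\R_{X_0}))=\overline{\Delta_G(G)}$, an injective factor of type $\mathrm{III}_\lambda$ for some $\lambda\in\,]0,1]$ \emph{depending on the group}; the Connes--Haagerup classification of injective factors for all such $\lambda$, together with \cite[Corollary 11]{Connes-Feldman-Weiss} and \cite[Theorem 1]{FM2}, yields orbit equivalence of the cross-section relations and hence ME via Theorem \ref{thm:me-and-soe-equivalence}. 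Second, to connect the different values of $\lambda$ to the $\mathrm{III}_1$ case, a group with $\Delta_G(G)=s_0^{\ZZ}$ is shown ME to the subgroup $\RR\rtimes s_0^{\ZZ}$ of the $ax+b$-group, which sits cocompactly and with compatible modular function inside the full $ax+b$-group; this finite-covolume coupling is the bridge from the $\mathrm{III}_\lambda$ world to the $\mathrm{III}_1$ world. Your argument needs a substitute for this bridging step; as written it only handles the groups with dense modular image.
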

We remark that the fact that  amenability is preserved under measure equivalence is a direct consequence of our Theorem A together with \cite[Proposition 5.10]{zimmer-82}

\subsection*{Acknowledgments}
DK and JK gratefully acknowledge the financial support from  the Villum Foundation (grant no.~7423) and from the Independent Research Fund Denmark (grant no.~7014-00145B).
The authors would also like to thank Stefaan Vaes for sharing his insights at various stages of the project and Maxime Gheysens and Nicolas Monod  for bringing the reference \cite{gheysens-monod} to their attention.

\section{Preliminaries}
\subsection{Group actions and orbit equivalence}
In this section we recall the notion of orbit equivalence and stable orbit equivalence of group actions. Our setting will always be that of locally compact, second countable (lcsc) groups, and if $G$ is such a group, $\lambda_G$ will denote a left Haar measure on $G$, and $\Delta_G\colon G\to \RR_+:=]0,\infty[$ its modular function.
We will assume familiarity with the basics on standard Borel (probability) spaces, group actions on these, essential freeness, ergodicity etc., all of which is treated in detail in \cite{KKR17} and the references therein.

\begin{defi}\label{def:oe-and-soe}
\label{def:soe}
Let $G$ and $H$ be lcsc groups  acting non-singularly on standard Borel probability spaces $(X,\mu)$ and $(Y,\nu)$ respectively.
\begin{itemize}
\item The two actions are said to be \emph{orbit equivalent} (OE) if there exist a non-singular Borel map $\Delta\colon X\to Y$ and  conull Borel subsets $X_0\subset X$ and $Y_0\subset Y$ such that  $\Delta$ restricts to a Borel isomorphism $\Delta_0\colon X_0 \to Y_0$ 
with the property that
\begin{align*}
\Delta_0(G.x \cap X_0)=H.\Delta_0(x)\cap Y_0 \text{ for all } x\in X_0.
\end{align*}
\item The two actions are said to be \emph{stably orbit equivalent} (SOE) if there exist  Borel subsets $A\subset X$ and $B\subset Y$ of strictly positive measure such that the saturations $G.A\subset X$ and $H.B\subset Y$ are conull, and there exists a Borel isomorphism $\Delta \colon A\to B$  with  $\Delta_*(\mu\restriction_{A} )\sim \nu\restriction_{B}$ such  that
\begin{align*}
\Delta(G.a \cap A)=H.\Delta(a)\cap B \text{ for all } a\in A.
\end{align*}

\end{itemize}
\end{defi}
\begin{rem}
We leave out the routine verifications that OE and SOE do indeed form equivalence relations. Note that both OE and SOE are preserved under passing to a conull invariant subset; i.e.~if $G\curvearrowright (X,\mu)$ and $H\curvearrowright (Y,\nu)$ are non-singular Borel actions and $X'\subset X, Y'\subset Y$ are conull Borel subsets invariant under $G$ and $H$, respectively, then the two original actions are OE (respectively SOE) if and only if the restricted actions to $X'$ and $Y'$ are OE (respectively SOE).  \\
\end{rem}

Another central  notion to the paper is that of a countable, non-singular Borel equivalence relation on a standard Borel probability space $(X,\mu)$. Recall that  this is an equivalence relation $\R\subset X\times X$ which is Borel in the product $\sigma$-algebra, has countable equivalence classes, and is non-singular in the following sense: for every pair of Borel subsets $A, B\subset X$ and every Borel isomorphism $\psi\colon A\to B$ with graph in $\R$ we have $\psi_*\mu\restriction_A\sim \mu\restriction_B$. The relation $\R$ is probability measure preserving (pmp) if $\psi_*(\mu(A)^{-1}\mu\restriction_A)=\mu(B)^{-1}\mu\restriction_B$ for all such $\psi$. Lastly, the equivalence relation $\R$ is said to be ergodic if for any Borel subset $A\subset X$ its saturation $\R[A]:=\{x\in X\mid \exists a\in A: (x,a)\in \R\}$ is either null or conull. 
Recall that, by \cite[Theorem 1]{FM1}, for any countable equivalence relation $\R$ there exists a countable group $\Gamma$ and a non-singular action $\Gamma\curvearrowright (X,\mu)$ for which $\R$ is the orbit equivalence relation $\R_{\Gamma\curvearrowright X}$, and measure preservingness and ergodicity of the relation $\R$ then agree with the corresponding classical notions for the action $\Gamma\curvearrowright X$. We may therefore also consider orbit equivalence and stable orbit equivalence of countable equivalence relations by simply defining them as in Definition \ref{def:oe-and-soe} via their implementations as orbit equivalence relations of group actions.  Note also that by Furman's results \cite[Theorem D]{Furman-OE-rigidity}, it is not always possible to find an essentially free group action realizing a given equivalence relation. Countable Borel equivalence relations are examples of so-called measure groupoids, which will be discussed in the following section. \\

\subsection{Measure groupoids}
In this section we give a brief introduction to the theory of measure groupoids, which provides a convenient setting in which to treat countable Borel equivalence relations and actions of lcsc groups simultaneously.  Since this is primarily a tool for our further investigations we will not elaborate on the basics of measure groupoids, but refer the reader to the detailed treatments in \cite{hahn-haar-measure-groupoids} and \cite{ramsey}. 
Let us first recall the notion of a measure groupoid from \cite[Definition 2.3]{hahn-haar-measure-groupoids}.  We refer to this reference for details.  A \emph{measure groupoid} is a pair $(\G, C)$, where $\G$ is a Borel groupoid whose Borel structure is analytic and $C$ is an invariant symmetric measure class.  We recall that if $(\G, C)$ is a measure groupoid with range and source maps $r(\gamma):=\gamma\gamma^{-1}$ and $s(\gamma):=\gamma^{-1}\gamma$ then, by  \cite{hahn-haar-measure-groupoids},  $\G$ admits a  Haar measure $(\lambda, \mu)$; that is, $\lambda\in C$ is a measure on $\G$ and $\mu\in r_*(C)$ is a measure on the unit space $U_{\G}:=r(\G)$, and $(\lambda,\mu)$ satisfies a natural invariance condition \cite[Definition 3.11]{hahn-haar-measure-groupoids}. As  for groups, such a Haar measure gives rise to a modular function $\Delta\colon \G\to \RR_+^*$, which is a strict homomorphism on an \emph{inessential reduction}; i.e.~the measure groupoid defined as $\G\restriction_{X_0}:=\{\gamma\in \G\mid r(\gamma), s(\gamma)\in X_0 \}$ for a conull Borel subset $X_0\subset U_\G$. The groupoid $\G$ is said to be \emph{unimodular} if it admits a Haar measure for which the modular function $\Delta$ is similar to the constant function 1; i.e.~there exists a Borel function $f\colon U_\G\to ]0,\infty[$ such that
$
\Delta=(f\circ r)^{-1}(f\circ s) \text{ almost everywhere}.
$
If this is the case for some choice of Haar measure then it is the case for any choice of Haar measure as shown in \cite[Corollary 3.14]{hahn-haar-measure-groupoids}, and unimodularity is therefore an intrinsic property of the measure groupoid $(\G, C)$. Lastly, we recall that two measure groupoids, $(\G, C)$ and $(\H,D)$, are said to be \emph{isomorphic} if there exist inessential reductions $\G_0$ and $\H_0$ of $\G$ and $\H$, respectively, and a Borel isomorphism $\Phi\colon \G_0 \to \H_0$ which is an algebraic isomorphism of groupoids \cite[Section 3]{feldman-hahn-moore} such that $\Phi_*C=D$. In this case we will write $\G\simeq \H$ and refer to the isomorphism $\Phi\colon \G_0 \to \H_0$ as a \emph{strict} isomorphism from $\G$ to $\H$. \\

 The example of importance for our purposes is the \emph{action groupoid} stemming from a non-singular action $G\curvearrowright (X,\mu)$ of an lcsc group $G$ on a standard Borel probability space $(X,\mu)$. In that situation, setting $\G:=G\times X$ with multiplication
\[
(g_2, x_2)\cdot (g_1,x_1):=(g_2g_1,x_1) \text{ if } x_2=g_1.x_1,
\]
turns $(\G, [\lambda_G\times \mu])$ into a measure groupoid, denoted  by $G\ltimes X$ in what follows. The range and source maps are given by $s(g,x)=(e_G,x)$ and $r(g,x)=(e_G,g.x)$ and, as is customary, we shall often notationally identify $U_\G$ with $X$.  The following two lemmas are probably well known to experts in the field, but we were unable to find  a suitable reference and have therefore included their proofs for the benefit of the reader.
\begin{lemma}\label{lem:unimod-of-groupoid}
If $G\curvearrowright (X,\mu)$ is a pmp action of an lcsc group $G$ on a standard Borel probability space then the action groupoid $G\ltimes X$ is unimodular if and only if $G$ is unimodular.
\end{lemma}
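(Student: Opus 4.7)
The plan is to compute the modular function of the action groupoid $\G = G\ltimes X$ explicitly and then translate the groupoid unimodularity condition into a cocycle equation for $\Delta_G$ that can be ruled out using the pmp hypothesis. For the natural Haar measure, I would take $\lambda = \lambda_G \times \mu$ on $\G$ together with $\mu$ on $U_\G \simeq X$; the disintegration over the range map $r(g,x) = g.x$ is given by push-forwards of $\lambda_G$, and left-invariance under $\G$-multiplication reduces to left-invariance of $\lambda_G$ under $G$. To compute the modular function one tests the inversion $i(g,x) = (g^{-1}, g.x)$ against a bounded Borel $F\colon \G \to \RR_+$:
\begin{align*}
\int F(i(g,x))\,d\lambda_G(g)\,d\mu(x)
&= \int F(g^{-1}, g.x)\,d\lambda_G(g)\,d\mu(x) \\
&= \int F(g^{-1}, y)\,d\lambda_G(g)\,d\mu(y) \\
&= \int F(g,y)\,\Delta_G(g)^{-1}\,d\lambda_G(g)\,d\mu(y),
\end{align*}
where the second equality uses the pmp substitution $y = g.x$ and the third uses the standard inversion rule for $\lambda_G$. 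This identifies the modular function of $\G$ as $D_\G(g,x) = \Delta_G(g)^{-1}$ (up to inversion convention), which is already a strict homomorphism, so no inessential reduction is required.

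The forward implication is immediate: if $G$ is unimodular then $\Delta_G \equiv 1$, so $D_\G \equiv 1$ and the constant function $f \equiv 1$ witnesses unimodularity of $\G$. For the converse, suppose $\G$ is unimodular via a Borel $f\colon X \to \,]0,\infty[$ satisfying $D_\G = (f\circ r)^{-1}(f\circ s)$ almost everywhere, which rearranges to
\begin{equation*}
\Delta_G(g) = \frac{f(g.x)}{f(x)} \quad \text{for $(\lambda_G\times\mu)$-a.e.\ $(g,x)$.}
\end{equation*}
By Fubini, for $\lambda_G$-almost every $g \in G$ this identity holds for $\mu$-almost every $x$. Fix such a $g$ and consider the bounded observable $\varphi(t) := t/(1+t)$ on $\,]0,\infty[$, which is strictly increasing. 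By pmp-invariance of $\mu$,
\begin{equation*}
\int_X \varphi(f(y))\,d\mu(y) = \int_X \varphi(f(g.x))\,d\mu(x) = \int_X \varphi\bigl(\Delta_G(g)f(x)\bigr)\,d\mu(x).
\end{equation*}
Since $c \mapsto \varphi(ct)$ is strictly increasing in $c>0$ for every fixed $t>0$, and $f$ takes strictly positive values on a set of full $\mu$-measure, this equality forces $\Delta_G(g) = 1$.

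Thus $\Delta_G \equiv 1$ on a set of full Haar measure in $G$. Since $\{g \in G : \Delta_G(g) = 1\}$ is a closed subgroup of $G$ (as $\Delta_G$ is a continuous homomorphism into $\RR_+$) and has positive, in fact full, Haar measure, it must coincide with $G$, so $G$ is unimodular. The only non-routine step is the backward direction, and the key trick is the use of the bounded strictly monotone functional $\varphi$ to convert the multiplicative cocycle relation into a rigid identity under the pmp integral; without the pmp hypothesis this argument collapses, reflecting the fact that for merely quasi-invariant $\mu$ the modular function of $\G$ mixes $\Delta_G$ with the Radon--Nikodym cocycle of the action.
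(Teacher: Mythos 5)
Your proof is correct, and its overall strategy is the same as the paper's: identify the modular function of $\G=G\ltimes X$ as $\Delta_G(g)$ (the paper simply cites Hahn's computation, which your direct calculation reproduces up to the inversion convention), and then show that the similarity relation $\Delta_G(g)=f(g.x)/f(x)$ combined with measure preservation produces a strict inequality between two equal finite integrals unless $\Delta_G$ is trivial. The two arguments differ only in the device used to make the relevant integrals finite. The paper argues by contradiction, restricting to a positive-measure sublevel set $X_n=\{f\leq n\}$ and a precompact open $U\subset G$ on which $\Delta_G<1$, and integrating $1_{U\times X_n}\,f$ against the measure-preserving map $(g,x)\mapsto (g,g^{-1}.x)$; you instead fix a single generic $g$ (via Fubini) and compose $f$ with the bounded, strictly increasing observable $\varphi(t)=t/(1+t)$, so that strict monotonicity of $c\mapsto\varphi(ct)$ forces $\Delta_G(g)=1$ directly. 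Your variant is marginally cleaner: it needs no choice of $U$ and no truncation of $f$, and you correctly supply the concluding step from ``$\Delta_G=1$ almost everywhere'' to ``$\Delta_G\equiv 1$'' via continuity, which the paper's contradiction argument sidesteps. Both proofs invoke the pmp hypothesis at exactly the same point, and your closing remark about why the argument collapses for merely quasi-invariant measures is accurate.
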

\begin{proof}
The measure $(\lambda_G\times \mu, \mu)$ is a Haar measure for $G\ltimes X$ with modular function $\Delta(g,x)=\Delta_G(g)$ \cite[Example 1.11]{hahn-reg-rep} so if $G$ is unimodular then clearly so is $G\ltimes X$. Conversely, if $G\ltimes X$ is unimodular and $\Delta$ denotes the modular function associated with $(\lambda_G\times \mu,\mu)$ then there exists  a Borel function $f\colon X\to ]0,\infty[$ such that
\[
\Delta_G(g)=\Delta (g,x)=f(g.x)^{-1}f(x) \text{ for almost all } (g,x)\in G\times X.
\]
The Borel sets $X_n:=\{x\in X\mid f(x)\leq n\}$  exhaust $X$, so from a certain point on these must have positive measure, and we now fix one such $X_n$. Assume, towards a contradiction, that $G$ is not unimodular. Then there exists an open, precompact subset $U\subset G$ such that $\Delta_G(g)<1$ for all $g\in U$, and for almost all $(g,x)\in U\times X_n$  we therefore have
\begin{align}\label{eq:alpha-preserves}
f(g^{-1}.x)=\Delta_G(g^{-1})^{-1}f(x)< f(x)\leq n.
\end{align}
The map $\alpha\colon G\times X \to G\times X $ given by $\alpha(g,x)=(g,g^{-1}.x)$ is measure preserving and maps a conull subset of $U\times X_n$ into $U\times X_n$ by \eqref{eq:alpha-preserves}. We therefore get
\begin{align*}
  \int_{G\times X} 1_{U\times X_n} (g,x) f(x) \d\lambda_G(g) \d\mu(x)
  & =\int_{G\times X} 1_{U\times X_n} (g,g^{-1}.x) f(g^{-1}.x) \d\lambda_G(g) \d\mu(x) \\
  &= \int_{G\times X} 1_{U\times X_n} (g,x) \Delta_G(g)^{-1} f(x) \d\lambda_G(g) \d\mu(x)\\
  & >  \int_{U\times X_n}  f(x) \d\lambda_G(g) \d\mu(x),
\end{align*}
and since both integrals are finite, 
we have reached the desired contradiction.
\end{proof}

We will also need  the notion of \emph{similarity} between two measure groupoids $\G$ and $\H$. 
Following  \cite[Section 4]{feldman-hahn-moore},   we denote by $\T$ the unique (up to isomorphism), transitive measure groupoid  with uncountable orbits. One realization of $\T$ is obtained by taking the action groupoid arising from the action of the unit circle $\rS^1$ on itself by rotations. Similarly, $\Z$ denotes the unique transitive measure groupoid  with countable orbits, which can be realized by the transitive equivalence relation on $\ZZ$ considered as a measure groupoid. Two measure groupoids $(\G,C)$ and $(\H,D)$ are said to be \emph{similar} if $\G\times \T$ and $\H\times \T$ are isomorphic. 
 Note that this definition agrees with the one given in \cite{feldman-hahn-moore},  by \cite[Theorem 6.1]{ramsey-topologies-on-measured-groupoids} and \cite[Theorem 4.6]{feldman-hahn-moore}. 
 Also note that for action groupoids arising from essentially free actions of non-discrete lcsc groups similarity actually is the same as isomorphism \cite[Corollary 5.8]{feldman-hahn-moore}.
Lastly, we remark that the notion of \emph{orbitally concrete} measure groupoids featured in \cite{feldman-hahn-moore} was later shown to be automatic in \cite[Theorem 5.6]{ramsey-topologies-on-measured-groupoids}.\\

The following lemma connects the notions of OE and SOE with isomorphism and similarity of measure groupoids.

\begin{lemma}\label{lem:OE-and-iso}
Let $G$ and $H$ be lcsc groups acting essentially freely, ergodically and non-singularly  on standard Borel probability spaces $(X,\mu)$ and $(Y,\nu)$, respectively, and denote by $\G$ and $\H$ the corresponding action groupoids. Then the following hold:
\begin{itemize}
\item[(i)] The actions are orbit equivalent if and only if $\G$ and $\H$ are isomorphic.
\item[(ii)] If the actions are stably orbit equivalent then $\G$ and $\H$ are similar.
\item[(iii)] If $G$ and $H$ are either both infinite discrete or both non-discrete and  $\G$ and $\H$ are similar then their actions are stably orbit equivalent.
\end{itemize}
\end{lemma}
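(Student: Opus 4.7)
For part (i), I would first handle the forward direction. Given an OE with Borel isomorphism $\Delta_0 \colon X_0 \to Y_0$ on conull invariant subsets, the key step is to produce a Borel cocycle $\omega \colon G \ltimes X_0 \to H$ by setting $\omega(g,x)$ equal to the unique $h \in H$ satisfying $h.\Delta_0(x) = \Delta_0(g.x)$; essential freeness of the $H$-action (after passing to an inessential reduction where it holds pointwise) makes this well defined, and a standard measurable selection argument makes it Borel. Then $\Phi(g,x) := (\omega(g,x), \Delta_0(x))$ is an algebraic isomorphism of groupoids onto $H \ltimes Y_0$, and non-singularity of $\Delta_0$ guarantees it preserves the measure classes, thus giving a strict isomorphism. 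The converse is essentially automatic: a strict isomorphism restricts on unit spaces to a Borel isomorphism of conull subsets preserving the measure class, and because the isomorphism respects $s$, $r$ and composition, it carries the orbit through $x$ (i.e.~$r(s^{-1}(x))$) to the orbit through its image.

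For part (ii), the plan is to combine (i) with the general theory of reductions. Starting from SOE data $\Delta \colon A \to B$ with conull saturations $G.A$ and $H.B$, the same cocycle construction as in (i) yields a strict isomorphism of the restricted measure groupoids $(G \ltimes X)|_A \simeq (H \ltimes Y)|_B$. I would then invoke the general fact from the theory of measure groupoids that if $A \subset U_\G$ is Borel with $\G.A$ conull then $\G|_A$ is similar to $\G$. Applying this on both sides and composing, one obtains the required chain of similarities $G \ltimes X \sim (G\ltimes X)|_A \simeq (H\ltimes Y)|_B \sim H \ltimes Y$.

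For part (iii), I split into the two cases indicated by the hypotheses. In the non-discrete case, the remark preceding the lemma, based on \cite[Corollary 5.8]{feldman-hahn-moore}, states that similarity and isomorphism agree for action groupoids of essentially free actions of non-discrete lcsc groups; hence $\G \simeq \H$, so by (i) the actions are OE and in particular SOE. In the discrete case, $\G$ and $\H$ are countable non-singular Borel equivalence relations, and here similarity via $\T$ corresponds to the classical notion of stable isomorphism of such relations, which one can unpack into a Borel isomorphism between positive-measure subsets with conull saturations intertwining the two relations, i.e.~an SOE in the sense of Definition \ref{def:oe-and-soe} via the Feldman--Moore realization.

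The main obstacle will be the delicate bookkeeping with inessential reductions in parts (i) and (ii): essential freeness and orbit-preservation hold only on conull subsets, and one must ensure that the cocycle $\omega$ is defined and Borel on such a subset, that the groupoid structure and measure class are preserved, and that the various chosen reductions can be compatibly assembled. A secondary technical point is the clean translation between similarity and stable isomorphism in the discrete subcase of (iii), which requires unpacking how multiplication by $\T$ interacts with equivalence relations having countable classes.
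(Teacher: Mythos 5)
Your proposal is correct and follows essentially the same route as the paper: the cocycle $\Phi(g,x)=(c(g,x),\Delta_0(x))$ construction for (i), restriction to $A$ and $B$ plus the fact that (under ergodicity) positive-measure restrictions are reductions and hence similar to the ambient groupoid for (ii), and the split into discrete/non-discrete cases with the Feldman--Hahn--Moore similarity machinery and Feldman--Moore for (iii). The only cosmetic differences are that in the non-discrete case of (iii) you invoke \cite[Corollary 5.8]{feldman-hahn-moore} directly where the paper rederives it from \cite[Theorem 5.6]{feldman-hahn-moore} via $\T\simeq\Z\times\T$, and in the discrete case the paper makes your ``unpacking'' precise via \cite[Proposition 4.14 and Theorem 4.12]{feldman-hahn-moore} followed by \cite[Theorem 3]{FM1}.
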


\begin{proof}
Since OE and SOE are preserved by passing to invariant, conull, Borel subsets and groupoid isomorphism and similarity are preserved by passing to inessential reductions, we may  assume that both actions are  free.  We first prove (i). Assume that the actions are OE and choose conull Borel subsets $X_0\subset X$ and $Y_0\subset Y$ and a Borel isomorphism  $\Delta_0\colon X_0 \to Y_0$ such that 
\begin{align}\label{oe-condition}
\Delta_0(G.x \cap X_0)=H.\Delta_0(x)\cap Y_0 \ \text{for all $x\in X_0$}.
\end{align}
 We now show that the inessential reduction $\G_0:=\G\restriction_{X_0}$ and $\H_0:=\H\restriction_{Y_0}$ are strictly isomorphic, and hence that $\G\simeq \H$ as desired. Let $(g,x)\in \G_0$ be given. Thus, $x, g.x\in X_0$ and by \eqref{oe-condition} and  freeness  there exists a unique $c(g,x)\in H$ such that $\Delta_0(g.x)= c(g,x)\Delta_0(x)$, and this defines a Borel map $c\colon \G_0 \to H$. 
 Again by freeness of the actions, the map $c$ is a cocycle in the sense that for $((g_2,x_2),(g_1,x_1))\in \G_0^{(2)}$ one has $g_1x_1=x_2$ and $c(g_2g_1,x_1)=c(g_2,g_1.x_1)c(g_1,x_1)$.  Define $\Phi\colon \G_0 \to \H_0$  by $\Phi(g,x):=(c(g,x),\Delta_0(x))$. 
 The defining relation $\Delta_0(g.x)= c(g,x)\Delta_0(x)$ shows that $\Phi\times \Phi$ maps $\G_0^{(2)}$ into $\H_0^{(2)}$ and
 the cocycle property for $c$  implies that 
 $\Phi$ is a strict homomorphism of groupoids from $\G_0$ to $\H_0$ which is easily seen to be bijective.
Lastly, we need to see that $\Phi$ pushes the class of the Haar measure on $\G_0$ onto the class of that of $\H_0$. However, since $\Phi$ is an algebraic isomorphism of groupoids, $\Phi_*(\lambda_G \times \mu)$ is a  Haar measure on $\H_0$, so to see that $\Phi_*(\lambda_G \times \mu)\sim \lambda_H\times \nu$ it suffices \cite[Proposition 3.4]{feldman-hahn-moore} to show that $(s_{\H})_*\Phi_*[\lambda_G\times \mu]=[\nu]$, which follows since  $s_\H\circ \Phi(g,x)=\Delta_0(x)$ and $\Delta_{0*}(\mu)\sim \nu$ by assumption.
Thus, $\Phi$ is an isomorphism of measure groupoids. \\

Conversely, assume that $\G\simeq \H$ and let $\Phi\colon \G_0 \to \H_0$ be a strict isomorphism  between inessential reductions of $\G$ and $\H$  with $\Phi_*[\lambda_G\times \mu]=[\lambda_H\times \nu]$. Denote by $X_0\subset X$ and $Y_0\subset Y$ the unit spaces of $\G_0$ and $\H_0$. Since $\Phi$ is an isomorphism, it maps units to units so $\Phi(e_G,x)=(e_H,\Delta_0(x))$ for a unique $\Delta_0(x)\in Y_0$ and we now prove that the Borel isomorphism $\Delta_0\colon X_0 \to Y_0$ defined like this is an orbit equivalence.  
 For $(g,x)\in \G_0$, denote $\Phi(g,x)$ by $(h,y)$ and note that
\[
(e_H, \Delta_0(x))=\Phi(e_G,x)=\Phi((g,x)^{-1}\cdot (g,x))=\Phi(g,x)^{-1}\cdot\Phi(g,x)=(h^{-1},h.y)\cdot(h,y)=(e_H, y),
\]
So defining $c:=\pi_H \circ \Phi$ we see that $\Phi(g,x)
=(c(g,x),\Delta_0(x))$, and  a direct computation, using that $\Phi$ is a homomorphism, reveals that $c$ satisfies the cocycle property. 
We now have
\begin{align*}
(e_H, \Delta_0(g.x))
=\Phi((g,x)\cdot (g,x)^{-1})
=(c(g,x), \Delta_0(x))\cdot (c(g,x),\Delta_0(x))^{-1}
=(e_H,c(g,x)\Delta_0(x))
\end{align*}
Thus,  $\Delta_0(g.x)=c(g,x).\Delta_0(x)$ and since $\Phi(g,x)=(c(g,x),\Delta_0(x))\in \H_{0}$ we have that $c(g,x).\Delta_0(x)\in Y_0$; i.e.~$\Delta_0$ satisfies \eqref{oe-condition}.
The last thing to be shown is that  $\Delta_{0*}\mu\sim \nu$, which can be seen using that $\Phi_*(\lambda_G\times \mu)\sim \lambda_H\times \nu$.\\

The proof of (ii) follows  the same line of reasoning and we therefore only sketch the argument: If the two actions are stably orbit equivalent, we obtain  Borel subsets $A\subset X$ and $B\subset Y$ of strictly positive measure and with  conull saturations,  as well as a Borel isomorphism $\Delta\colon A\to B$ pushing 
$[\mu\restriction_A]$ to $[\nu\restriction_B ]$ and satisfying the analogue of \eqref{oe-condition} on these subsets. As above, we get a cocycle $c\colon \G\restriction_A \to H$ and a map
$\Phi\colon \G\restriction_A \to \H\restriction_B$ defined by $\Phi(g,a)=(c(g,a), \Delta(a))$ which is seen to be a strict isomorphism of groupoids. Since $\G$ and $\H$ are ergodic and $A,B$ have positive measure, \cite[Proposition 4.9]{feldman-hahn-moore} shows that $\G\restriction_A$ and $\H\restriction_B$ are reductions of $\G$ and $\H$, respectively, and since the two reductions are isomorphic we conclude that $\G$ and $\H$ are similar. 
To prove (iii), assume first that both $G$ and $H$ are discrete and that $\G$ and $\H$ are similar. By \cite[Proposition 4.14 and Theorem 4.12]{feldman-hahn-moore}, this means that $\G\times \Z\simeq \H \times \Z$. Realizing $\Z$ as the equivalence relation of a discrete group action and applying (i), this shows that $\R_{G\curvearrowright X}\times \Z$ and $\R_{H\curvearrowright Y}\times \Z$ are orbit equivalent and by \cite[Theorem 3]{FM1}  we conclude that the original actions are stably orbit equivalent. If both $G$ and $H$ are non-discrete and $\G$ and $\H$ are similar, then by 
\cite[Theorem 5.6]{feldman-hahn-moore} there exist measure groupoids $\G_0$ and $\H_0$ with discrete orbits so that $\G\simeq \G_0\times \T$ and $\H\simeq \H_0\times \T$. In particular, $\G_0$ and $\H_0$ are reductions (cf.~\cite[Definition 3.5]{feldman-hahn-moore}) of $\G$ and $\H$, respectively, and since $\G$ and  $\H$ are similar, so are $\G_0$ and $\H_0$. By \cite[Proposition 4.14 and Theorem 4.12]{feldman-hahn-moore} this implies that $\G_0\times \Z\simeq \H_0\times \Z$ and since $\T\simeq \Z\times \T$, we have
\[
\G\simeq \G_0\times \Z\simeq  \G_0\times \Z \times \T \simeq \H_0\times \Z \times \T \simeq \H_0\times \T\simeq \H,
\]
so in this case $\G$ and $\H$ are not only similar, but actually isomorphic, and the associated actions therefore  orbit equivalent by (i).
\end{proof}

\begin{rem}\label{rem:non-free-iso-implies-OE}
Note that, in the proof of Lemma \ref{lem:OE-and-iso}, freeness of the actions was not used when proving that isomorphism of the action groupoids implies orbit equivalence of the actions. 
\end{rem}

\subsection{Cross section equivalence relations}\label{subsec:cross-sections}

In this section we recall the notion of a cross section for an action of an lcsc group, which is originally due to Forrest \cite{Forrest} (treated later in \cite{feldman-hahn-moore}; see also \cite{KPV} and \cite{carderi-le-maitre} for more recent treatments), and extend \cite[Proposition 4.3]{KPV} to the setting of non-unimodular groups. 
Let $G$ be an lcsc  group  and let $(X,\mu)$ be a standard Borel probability space endowed with a non-singular, essentially free action $\theta\colon G \curvearrowright (X,\mu)$. 
By \cite[Proposition 2.10]{Forrest} (see also \cite[Theorem 4.2]{KPV}) one may find a Borel subset $X_0\subset X$ and an open neighbourhood of the identity $U\subset G$ such that: 
\begin{itemize}
\item[(i)] the restricted action map $\theta\restriction \colon U\times X_0 \to X$ is injective, and
\item[(ii)] the subset $G.X_0$ is Borel and conull in $X$.
\end{itemize}
A subset $X_0\subset X$ with the above mentioned properties is called a \emph{cross section} for the action. 
Note that since $G$ is assumed second countable, if $\theta\restriction \colon U\times X_0 \to X$ is injective then $\theta\restriction\colon G\times X_0\to X$ is countable-to-one, so that $G.X_0$ is automatically Borel. It is not difficult to prove that $\R_{X_0}:=\{(x,x')\in X_0\times X_0\mid x'\in G.x\}$ is a countable Borel equivalence relation and that $X_0$ admits a probability measure $\mu_0$ with respect to which $\R_{X_0}$ is non-singular and whose class is compatible with the class of $\mu$ in a suitable sense. The existence of $\mu_0$ can either be derived as in the proof of \cite[Proposition 4.3]{KPV}, or  from \cite{feldman-hahn-moore} as follows:  If $X_0\subset X$ is a cross section  and $X'$ denotes the free part of $X$ then $X_0\cap X'$ is a cross section for the  restricted action $G\curvearrowright (X',\mu)$ as well as for $G\curvearrowright (X,\mu)$, and hence we may assume that the original action is free.   
Denote by $\G$ the action groupoid $G\ltimes X$. 
Since the action $G\curvearrowright (X,\mu)$ is free, a cross section as defined above is exactly the same as a \emph{lacunary section} as defined in \cite{feldman-hahn-moore} and by \cite[Theorem 5.3]{feldman-hahn-moore} (and its proof) the restriction $\G\restriction_{X_0}\simeq \R_{X_0}$  is a discrete reduction; thus, in particular, a measure groupoid in its own right. Denote by $[\lambda_0]$ the class of the Haar measure on $\G\restriction_{X_0}$ and by $\mu_0$ a probability representative for the induced measure class  $[r_*\lambda_0]$ on the unit space $X_0$ of $\G_0$; then $\R_{X_0}$ preserves $[\mu_0]$ by \cite[Remark 3.2]{feldman-hahn-moore}.  
 In the situation where $G$ is a countable group, the entire space $X$ serves as a cross section and hence $\G\simeq \R_{X_0}$,  and in the case where $G$ is non-discrete, \cite[Theorem 5.6]{feldman-hahn-moore} shows that
$
\G\simeq \T\times \R_{X_0}.
$
Since $\T$ can be realized as the action groupoid of the multiplication action $\rS^1\curvearrowright (\rS^1,\lambda_{\rS^1})$ and $\R_{X_0}$ can be realized as the orbit equivalence relation of an action of a discrete group  $\Gamma$ (see \cite[Therorem 1]{FM1}), Lemma \ref{lem:OE-and-iso} and Remark \ref{rem:non-free-iso-implies-OE} now show that the the action $G\curvearrowright (X,\mu)$ is orbit equivalent to $\rS^1\times \Gamma \curvearrowright (\rS^1\times X_0,\lambda_{\rS^1}\times \mu_0)$; see also \cite[Theorem 1.15]{carderi-le-maitre} for the corresponding result in the unimodular case.\\

\subsection{von Neumann algebraic aspects}\label{sec:vNa}
As shown in \cite{hahn-reg-rep}, every measure groupoid $(\G, C)$ naturally gives rise to a  von  Neumann algebra $L(\G)$ and in the case where $\G=G\ltimes X$ for a pmp action 
$G\curvearrowright (X,\mu)$ the von Neumann algebra $L(\G)$ is isomorphic to the crossed product von Neumann algebra $\Linfty(X)\rtimes G$ and is therefore a factor if the action is essentially free and ergodic \cite[XIII, Corollary 1.6]{takesaki-3}. The following proposition, which is probably well known to experts, relates various notions of amenability.  For definitions and basic properties we refer to: \cite[Chapter XVI]{takesaki-3} for the von Neumann algebraic setting,   \cite[Appendix G]{BHV} for group theoretical setting  and \cite{Connes-Feldman-Weiss} for the setting of countable equivalence relations.

\begin{prop}\label{prop:amenable-and-injective}
If $G$ is a non-discrete lcsc group, $G\curvearrowright (X,\mu)$ is an essentially free, ergodic  pmp action, and $X_0\subset X$ is a cross section for the action then
$
\Linfty(X)\rtimes G \simeq \BB(\ell^2(\NN))\htens L(\R_{X_0})
$
 and the following are equivalent:
 \begin{enumerate}
 \item $G$ is amenable.
  \item $\Linfty(X)\rtimes G$ is injective.
  \item $L(\R_{X_0})$ is injective.
  \item $\R_{X_0}$ is hyperfinite.
  \end{enumerate}
 \end{prop}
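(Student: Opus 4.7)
The plan is to first establish the stated isomorphism and then deduce the four equivalences in the order $(2)\Leftrightarrow(3)$, $(3)\Leftrightarrow(4)$, $(1)\Leftrightarrow(2)$.

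For the isomorphism, I would invoke the groupoid identification derived in the cross section discussion preceding the proposition: since $G$ is non-discrete and the action is essentially free, the action groupoid $\G := G\ltimes X$ satisfies $\G \simeq \T\times \R_{X_0}$ by \cite[Theorem 5.6]{feldman-hahn-moore}. Passing to groupoid von Neumann algebras, which is multiplicative under direct products, and identifying $L(\T)$ with the type $\mathrm{I}_\infty$ factor $\BB(\ell^2(\NN))$ (as the von Neumann algebra of a transitive measure groupoid with uncountable orbits, realized for instance by $\rS^1 \curvearrowright \rS^1$), I obtain
\[
\Linfty(X)\rtimes G \;\simeq\; L(\G) \;\simeq\; L(\T)\htens L(\R_{X_0}) \;\simeq\; \BB(\ell^2(\NN))\htens L(\R_{X_0}).
\]

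Granting this isomorphism, $(2)\Leftrightarrow(3)$ follows immediately from the standard fact that injectivity of a von Neumann algebra $M$ is equivalent to injectivity of $M\htens \BB(\ell^2(\NN))$: the type $\mathrm{I}_\infty$ factor is injective, so the tensor product is injective whenever $M$ is; conversely, $M$ embeds in $M\htens \BB(\ell^2(\NN))$ as the range of a faithful normal conditional expectation and hence inherits injectivity. The equivalence $(3)\Leftrightarrow(4)$ is the Connes--Feldman--Weiss theorem \cite{Connes-Feldman-Weiss}.

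For $(1)\Leftrightarrow(2)$, the direction $(1)\Rightarrow(2)$ is classical: crossed products of injective von Neumann algebras by amenable lcsc groups are injective. For the converse $(2)\Rightarrow(1)$, the pmp property of the action supplies a faithful normal conditional expectation $E\colon \Linfty(X)\rtimes G \to L(G)$ given on generators by $E(fu_g) = \int_X f\,\d\mu \cdot u_g$, so injectivity of the crossed product forces injectivity of $L(G)$, from which amenability of $G$ follows via the lcsc version of Connes' characterization \cite{con76}. The step I anticipate as the main obstacle is precisely this last implication in the non-discrete setting: while the conditional expectation argument readily transfers injectivity from $\Linfty(X)\rtimes G$ to $L(G)$, the final implication ``$L(G)$ injective $\Rightarrow$ $G$ amenable'' for lcsc groups requires pinning down the correct generalization of Connes' discrete statement rather than quoting it verbatim.
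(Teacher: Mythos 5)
Your derivation of the isomorphism $\Linfty(X)\rtimes G\simeq \BB(\ell^2(\NN))\htens L(\R_{X_0})$ via $G\ltimes X\simeq \T\times\R_{X_0}$, your proof of $(2)\Leftrightarrow(3)$, the appeal to Connes--Feldman--Weiss for $(3)\Leftrightarrow(4)$, and the direction $(1)\Rightarrow(2)$ all agree in substance with the paper (which cites \cite[Proposition 8.6 and Theorem 8.10]{feldman-hahn-moore} for the tensor splitting and for injectivity under amenability). The problem is your argument for $(2)\Rightarrow(1)$. You compress the crossed product onto $L(G)$ by a conditional expectation and then hope to conclude amenability of $G$ from injectivity of $L(G)$ via ``the lcsc version of Connes' characterization.'' No such version exists: the implication ``$L(G)$ injective $\Rightarrow$ $G$ amenable'' is simply false for non-discrete groups. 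Every connected lcsc group has an injective group von Neumann algebra; concretely, $\mathrm{SL}_2(\RR)$ is a type I group, so $L(\mathrm{SL}_2(\RR))$ is a type I von Neumann algebra and in particular injective, while $\mathrm{SL}_2(\RR)$ is certainly not amenable. The step you flagged as the ``main obstacle'' is therefore not a gap to be filled but a dead end: passing to $L(G)$ discards exactly the information that makes the implication true.

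The paper's proof avoids this by never leaving the crossed product. Injectivity of $L(\G)$ for the free action groupoid $\G=G\ltimes X$ implies that $\G$ is an amenable groupoid by \cite[Corollary 6.2.12]{anantharaman-renault}; this in turn gives amenability of the action $G\curvearrowright(X,\mu)$ in Zimmer's sense (via \cite[Theorem 4.2.7]{anantharaman-renault}); and for an essentially free, ergodic, \emph{pmp} action, amenability of the action is equivalent to amenability of the group by \cite[Theorem 2.1, Propositions 4.3 and 4.4]{zimmer-amenable-ergodic-group-actions}. Note where the pmp hypothesis does its real work: every lcsc group admits amenable non-singular actions (e.g.\ on its Poisson boundary), but only amenable groups admit amenable pmp actions. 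In your argument the pmp assumption is used only to build the expectation onto $L(G)$, which is too weak a use of it; you need it at the level of the action, as in Zimmer's theorem.
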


\begin{proof}
As shown in Section \ref{subsec:cross-sections}, the action groupoid $\G:=G\ltimes X$ splits as $\T \times \R_{X_0}$ and by \cite[Proposition 8.6]{feldman-hahn-moore} the associated von Neumann algebra therefore splits as a tensor product; i.e.
\[
\Linfty(X)\rtimes G \simeq L(\G)\simeq L( \T) \htens L(\R_{X_0})\simeq \BB(\ell^2(\NN)) \htens L(\R_{X_0}).
\]
Since  $\BB(\ell^2(\NN)) \htens L(\R_{X_0})$ is injective iff $ L(\R_{X_0})$ is injective, the equivalence of $(2)$ and $(3)$ follows.   Assuming that $G$ is amenable, \cite[Theorem 8.10]{feldman-hahn-moore} shows that $L(\G)$ is AF; equivalently injective by Connes' results, see~\cite[XVI]{takesaki-3}. Conversely, assuming $L(\G)$ injective, by \cite[Corollary 6.2.12]{anantharaman-renault} this implies that $\G$ is amenable \cite[Definition 3.2.8]{anantharaman-renault}, as 
the action is assumed free. Using \cite[Theorem 4.2.7]{anantharaman-renault}, it is not difficult to see that if $\G$ is amenable then action $G\curvearrowright (X,\mu)$ is amenable in the sense of \cite[Definition 1.4]{zimmer-amenable-ergodic-group-actions}, and since the action is assumed essentially free, pmp and ergodic, amenability of the action is equivalent with amenability of $G$ by  \cite[Theorem 2.1, Proposition 4.3 \& 4.4]{zimmer-amenable-ergodic-group-actions}. Thus, the equivalence of (1), (2) and (3) follows. 
 Lastly, we need to show the equivalence between (3) and (4),  which follows from Proposition 7.1 (and the remark following it) and Theorem 10 in \cite{Connes-Feldman-Weiss}.
\end{proof}
\begin{rem}
As the proof just given shows, under the assumptions in  Proposition \ref{prop:amenable-and-injective} the four equivalent conditions are furthermore equivalent with amenability of the action groupoid  \cite[Definition 3.2.8]{anantharaman-renault} as well as with amenability of the action  \cite[Definition 1.4]{zimmer-amenable-ergodic-group-actions}.  
\end{rem}

Injective factors are well understood via their so-called \emph{$\rS $-invariant} \cite{con76, connes-type-III-classification, uffe-bicentralizer}, which we will now describe in the special case of a free, ergodic pmp action $G\curvearrowright (X,\mu)$ of an lcsc group $G$ on a standard Borel probability space $(X,\mu)$. The faithful state $\varphi\colon \Linfty(X)\to \CC$  given by integration against $\mu$ gives rise to a dual weight $\tilde{\varphi}$ on the crossed product $\Linfty(X)\rtimes G$ \cite[X, Definition 1.16]{takesaki-2} and the modular operator $\Delta_{\tilde{\varphi}}$ has spectrum 
$\Sp(\Delta_{\tilde{\varphi}})=\overline{\Delta_G(G)}$, where the the latter denotes the closure in $\RR$ of the image of the modular function \cite[Chapter X]{takesaki-2}. If $(\sigma_t^{\tilde{\varphi}})_{t\in \RR}$ denotes the modular automorphism group associated with $\tilde{\varphi}$, the centralizer of the dual weight is defined as $\{x\in \Linfty(X)\rtimes G \mid \forall t\in \RR: \sigma_t^{\tilde{\varphi}}(x)=x \}$ and this subalgebra is isomorphic 
with the crossed product $\Linfty(X)\rtimes G_0$ where $G_0:=\ker(\Delta_G)$. If this is also a factor (equivalently, if the action of $G_0$ is also ergodic) then the $\rS $-invariant of the crossed product is  given by \cite[Corollaire 3.2.7]{connes-type-III-classification}
\[
\rS (\Linfty(X)\rtimes G)=\Sp(\Delta_{\tilde{\varphi}})=\overline{\Delta_G(G)}.
\]

\begin{rem}\label{rem:mp-of-action-vs-relation}
As a consequence of the discussions above, we note  that measure preservingness of an action $G\curvearrowright (X,\mu)$ of a non-discrete group on a standard probability space is not  a property of the induced orbit equivalence relation as it is the case for actions of discrete groups. 
To see this,  choose a free, (strongly) mixing \cite{klaus-schmidt} pmp action $G\curvearrowright (X,\mu)$, which is known to exist, e.g.~by  \cite[Remark 1.1]{KPV}, of a non-unimodular group $G$.  Picking a cross section $X_0\subset X$, Lemma \ref{lem:OE-and-iso}, Remark \ref{rem:non-free-iso-implies-OE} and the results in Section \ref{subsec:cross-sections} show that $G\curvearrowright (X,\mu)$ is orbit equivalent with an action $\rS^1\times \Gamma\curvearrowright (\rS^1\times X_0,\lambda_{\rS^1}\times \mu_0)$ such that $\R_{X_0}=\R_{\Gamma \curvearrowright X_0}$.  We argue that $\R_{X_0}$ is not pmp and hence $\rS^1\times \Gamma\curvearrowright (\rS^1\times X_0,\lambda_{\rS^1}\times \mu_0)$ is not pmp.

Indeed, as $G$ is non-compact so is $\ker(\Delta_G)$ \cite[(38.26)]{hewitt-ross} and it therefore still acts ergodically. Thus \cite[Corollaire 3.2.8]{connes-type-III-classification} implies that
\[
\rS \left(L(\R_{X_0})\right)=\rS \left(\BB(\ell^2(\NN))\bar{\otimes} L(\R_{X_0})\right)=\rS \left(\Linfty(X)\rtimes G\right)=\overline{\Delta_G(G)}\neq \{1\},
\]
and $L\R_{X_0}$ is therefore not type II$_1$, and $\R_{X_0}$  not pmp.
\end{rem}

\section{Measure equivalence}
In this section we begin our study of measure equivalence as introduced by  Bader-Furman-Sauer for unimodular lcsc groups in  \cite{BFS-integrable}  and by
Deprez-Li in \cite{deprez-li-ME} for general lcsc groups.

\begin{defi}[{\cite[Definition 3.1]{deprez-li-ME}}]
Let $G$ and $H$ be lcsc groups. A non-singular  standard Borel $G\times H$-measure space $(\Omega,\eta)$ is called a \emph{measure $G$-$H$ correspondence} if there exist standard Borel probability spaces $(X,\mu)$ and $(Y,\nu)$ and 
\begin{itemize}
\item A non-singular isomorphism of measure $G$-spaces $i\colon (G\times Y, \lambda_G\times \nu) \to (\Omega, \eta)$ where $G\times Y$ is considered a $G$-space for the left multiplication action on the first leg.
\item A non-singular isomorphism of measure $H$-spaces $j\colon (H\times X, \lambda_H\times \mu) \to (\Omega, \eta)$ where $H\times X$ is considered a  $H$-space for the left multiplication action on the first leg.
\end{itemize}
\end{defi}
Note that such correspondences always exist since, for given $G$ and $H$, one may simply take $(\Omega,\eta):=(G\times H, \lambda_G\times \lambda_H)$ endowed with the translation action, so in order to get an interesting class of couplings with which to define a notion of measure equivalence, one needs to demand further structure. To this end, note that
a measure correspondence gives rise to non-singular near actions $G\curvearrowright (X,\mu)$ and $H\curvearrowright (Y,\nu)$ and associated Borel cocycles $\omega_G\colon H\times Y\to G$ and $\omega_H\colon G\times X\to H$ which are defined (almost everywhere) by the requirements that
\begin{align*}
i(g\omega_G(h,y)^{-1}, h.y) := h.i(g,y) \ \text{ and }  j(h\omega_H(g,x)^{-1}, g.x):=g.j(h,x).
\end{align*}
Recall that from \cite[Definition 2.10]{KKR17} that a near action is a composition law $G \times X \to X$ for which all axioms of an action hold only almost everywhere.  For more details on the construction of these maps we refer to \cite[Section 2]{KKR17}. The following notion of measure equivalence was suggested by Deprez and Li in \cite{deprez-li-ME}.

\begin{defi}[{\cite[Definition 3.5]{deprez-li-ME}}]\label{def:ME}
A measure $G$-$H$ correspondence $(\Omega, \eta, X,\mu, Y,\nu, i,j)$ is said to be  a \emph{$G$-$H$ measure equivalence coupling} if there exist probability measures $\mu'\in [\mu]$ and $\nu'\in [\nu]$ with respect to which the induced near actions of $G$ and $H$ on $X$ and $Y$, respectively, are pmp.  If a $G$-$H$ measure equivalence coupling exists then $G$ and $H$ are said to be \emph{measure equivalent} (ME). A measure $G$-$H$ correspondence $(\Omega, \eta, X,\mu, Y,\nu, i,j)$ is called a  \emph{$G$-$H$ measure subgroup coupling} if there exists a $G$-invariant probability measure in $[\mu]$, and in this case $G$ is said to be a \emph{measure equivalence subgroup} of $H$.
 \end{defi}

\begin{rem}\label{rem:ME-defi-comparison}
Similarly to the proof in the unimodular case \cite[Appendix A]{BFS-integrable}, one verifies  that measure equivalence is indeed an equivalence relation, and
 for  unimodular lcsc groups the two definitions coincide, as shown in \cite[Proposition 3.6]{deprez-li-ME}. 
A natural source of examples comes from the following situation:  if $G$ is an lcsc group and $H\leq G$ is a closed subgroup with $\Delta_G\restriction_H=\Delta_H$ then by \cite[Corollary B.1.7]{BHV} there exists a $G$-invariant measure on $G/H$ which is unique up to scaling,  and if this measure is finite (which happens for instance when $H$ is cocompact in $G$)  then $G$ and $H$ are ME. In this situation,  $H$ is said to be of \emph{finite covolume} in $G$.
\end{rem}

\begin{rem}\label{rem:tychomorphism}
In \cite{gheysens-monod}, Gheysens and Monod  introduced the notion of a \emph{tychomorphism} between lcsc groups and for the sake of completeness we now show that measure equivalence subgroups are essentially the same as tychomorphisms. If $G$ and $H$ are lcsc groups, one easily checks that a tychomorphism \cite[Definition 14]{gheysens-monod} from $H$ to $G$ is
also a $H$-$G$ measure subgroup coupling. 
Conversely, if $(\Omega,\eta, X,\mu, Y,\nu, i,j)$ is a $H$-$G$ measure subgroup coupling, we may assume that $\nu$ is an $H$-invariant probability measure and hence obtain that $\eta':=i_*(\check{\lambda}_G\times \nu)$ is an $H$-invariant measure in the class of $\eta$ (here $\check{\lambda}_G$ denotes the push forward of $\lambda_G$ via the inversion map). Applying \cite[Lemma 2.10]{KKR17} we now obtain a Borel function $b\colon X\to [0,\infty[$ such that $(i^{-1})_*\eta'=\lambda_H \times b\mu$ and hence $(\Omega, \eta', X, b\mu, Y, \nu, i,j)$ is a tychomorphism from $H$ to $G$. It therefore follows from  \cite[Theorem B]{gheysens-monod} that an lcsc groups is non-amenable if and only if it has a non-abelian free measure equivalence subgroup.
\end{rem}

To work efficiently with measure equivalence we need to show that one can always obtain (essentially) free and ergodic couplings. To this end we need the following lemma, which is surely well known to experts in ergodic theory, but for which we were unable to find a suitable reference. Recall than any probability measure $\nu$ on a standard Borel $G$-space $X$ allows for an essentially unique decomposition into ergodic components $\nu = \int_Z \nu_z \d p_* \nu (z)$ for some $G$-invariant Borel map $p: X \to Z$ onto a standard probability space as described in \cite[Section V]{Dang-decomposition}.  

\begin{lem}\label{lem:eq-ergodic-fibers}
  Let $G$ be an lcsc group and let $\rho$ and $\eta$ be quasi-invariant, equivalent probability measures on a standard Borel $G$-space $X$ and let $\int_Z \rho_z \d p_* \rho(z)$ be an ergodic decomposition of $\rho$. 
  Denoting a Borel representative of the Radon-Nikodym derivative 
$\tfrac{\d\eta}{\d\rho}\colon X\to ]0,\infty[$ by $f$ and defining 
$g\colon Z\to ]0,\infty[$ as $g(z):=\int_X f(x)\d\rho_z(x)$ then $g$ is measurable and $\d\eta_z:=\tfrac{1}{g(z)}f\d \rho_z$ is an ergodic decomposition of $\eta$. In particular, $\eta_z$ and $\rho_z$ are equivalent for almost all $z\in Z$.
\end{lem}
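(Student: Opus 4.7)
The plan is to verify in turn the measurability of $g$, the claim that each $\eta_z$ is a probability measure equivalent to $\rho_z$, and finally the disintegration identity $\eta = \int_Z \eta_z \, \d p_*\eta(z)$ together with the ergodicity of each $\eta_z$.

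First, measurability of $g$ is inherent to the notion of a disintegration: by definition of an ergodic decomposition, for every non-negative Borel function $h\colon X\to [0,\infty]$ the map $z\mapsto \int h\,\d\rho_z$ is Borel measurable, and I would apply this with $h=f$. Since $\eta\sim\rho$, the function $f$ is strictly positive on a $\rho$-conull Borel set, which, via the disintegration, is $\rho_z$-conull for $p_*\rho$-almost every $z$; hence $g(z)\in ]0,\infty[$ for $p_*\rho$-a.e.~$z$. After discarding this null set and redefining $\eta_z := \rho_z$ on it (a change that leaves every integrated quantity unchanged), one may assume $g$ takes values in $]0,\infty[$ everywhere, each $\eta_z$ is a probability measure by the very definition of $g$, and $\eta_z \sim \rho_z$ for every $z\in Z$; this yields the ``in particular'' assertion.

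Next, I would verify the decomposition itself. The key input is the standard property that $\rho_z$ is concentrated on the fibre $p^{-1}(\{z\})$. For a Borel set $A\subset Z$, one computes
\[
p_*\eta(A)=\int_{p^{-1}(A)} f\,\d\rho = \int_Z \int_X 1_{p^{-1}(A)}(x)\,f(x)\,\d\rho_z(x)\,\d p_*\rho(z) = \int_A g(z)\,\d p_*\rho(z),
\]
so $p_*\eta = g\cdot p_*\rho$. Similarly, for any Borel $B\subset X$,
\[
\eta(B)=\int_Z \int_B f\,\d\rho_z\,\d p_*\rho(z)= \int_Z g(z)\,\eta_z(B)\,\d p_*\rho(z)=\int_Z \eta_z(B)\,\d p_*\eta(z),
\]
giving the sought disintegration identity.

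Finally, quasi-invariance and ergodicity of each $\eta_z$ are immediate from $\eta_z\sim\rho_z$, since both properties depend only on the underlying measure class. Combined with the disintegration identity, this shows that $\{\eta_z\}_{z\in Z}$ together with $p_*\eta$ constitutes an ergodic decomposition of $\eta$ over the map $p$. The main technical point I anticipate is the bookkeeping surrounding null sets: the defining properties of the disintegration (concentration on fibres, measurability of integrated quantities, ergodicity of components) hold only for $p_*\rho$-almost every $z$, so some care is required to deduce the pointwise statements appearing in the conclusion; this is resolved by the standard procedure of discarding a $p_*\rho$-null set and redefining $\eta_z$ there, as outlined above.
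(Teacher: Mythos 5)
Your proof is correct and follows essentially the same route as the paper's: measurability of $g$ from the defining properties of a decomposition, positivity of $g$ from $f>0$, the identity $p_*\eta=g\cdot p_*\rho$, and then the verification of $\eta=\int_Z\eta_z\,\d (p_*\eta)(z)$ by a Fubini-type computation, with ergodicity of $\eta_z$ coming from $\eta_z\sim\rho_z$. The only cosmetic difference is that you justify finiteness of $g$ almost everywhere only implicitly (it follows from your own identity via $\int_Z g\,\d (p_*\rho)=\eta(X)=1$), whereas the paper records this explicitly before defining $\eta_z$.
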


\begin{proof}
The fact that $g\colon Z\to [0,\infty] $  is measurable is contained in the definition of a decomposition, and since we have
\[
\int_{Z}g(z)\d p_*\rho(z)= \int_{Z}\int_X f(x)\d\rho_z(x)\d p_*\rho(z)=\int_X f(x)\d \rho(x)=\eta(X)=1,
\]
we also have that $g$ is finite $p_*\rho$-almost everywhere. Similarly, since $f>0$ and each $\rho_z$ is a probability measure it follows that $g$ is positive $p_*\rho$-almost everywhere.
The measure $\eta_z$ is then well-defined $p_*\rho$-almost everywhere and, by construction, each $\eta_z$ is a probability measure supported in $p^{-1}(z)$ since this is the case for $\rho_z$ (cf.~\cite[Theorem V.1]{Dang-decomposition}). Moreover, since $f>0$ and $\rho_z$ is ergodic so is $\eta_z$ so we only need to prove that $\eta=\int_Z \eta_z d(p_*\eta)(z)$.  Since $\eta \sim \rho$ we have $p_*\eta \sim p_*\rho$ and we first show that $\tfrac{\d (p_*\eta)}{\d (p_*\rho)}=g$. To see this, we fix a Borel function $h\colon Z \to [0,\infty[$ and compute:
\begin{align*}
\int_Z h(z)g(z)\d (p_*\rho)(z) &= \int_Z \int_X h(z)f(x) \d \rho_z(x) \d (p_*\rho)(z)\\
&= \int_X \int_X h(p(x'))f(x) \d \rho_{p(x')}(x) \d \rho(x') \\
&= \int_X \int_X h(p(x'))f(x) 1_{p^{-1}(p(x'))}(x) \d \rho_{p(x')}(x)   \d \rho(x') \\
&= \int_X \int_X h(p(x))f(x) 1_{p^{-1}(p(x'))}(x) \d \rho_{p(x')}(x)   \d \rho(x') \\
&= \int_X \int_X h(p(x))f(x)  \d \rho_{p(x')}(x)   \d \rho(x') \\
&= \int_Z \int_X h(p(x))f(x)  \d \rho_{z}(x)   \d (p_*\rho)(z) \\
&=  \int_X h(p(x))f(x)  \d \rho(x) \\
&=\int_Z h(z) \d (p_*\eta)(z).
\end{align*}
Thus $g= \tfrac{\d (p_*\eta)}{\d (p_*\rho)}$ as claimed and from this it will now follow that $\eta= \int_{Z} \eta_z \d (p_*\eta)(z)$. To see the latter, put $\sigma:=\int_{Z} \eta_z \d (p_*\eta)(z)$ and let $h\colon X\to [0,\infty[$ be Borel; then
\begin{align*}
\int_{X} h(x)\d \sigma(x) &= \int_Z \int_X h(x) \d\eta_z(x)\d (p_*\eta)(z)\\
&= \int_Z \int_X h(x) \frac{f(x)}{g(z)} \d\rho_z(x)\d (p_*\eta)(z)\\
&= \int_Z \int_X h(x) \frac{f(x)}{g(z)} \d\rho_z(x) g(z)\d (p_*\rho)(z)\\
&= \int_X h(x) f(x) \d\rho(x)\\
&= \int_X h(x) \d\eta(x) \qedhere
\end{align*}
\end{proof}

Recall from \cite{KKR17} that an ME coupling $(\Omega,\eta,X,\mu,Y,\nu,i,j)$ is called \emph{strict} if $i$ and $j$ are equivariant Borel isomorphisms.
\begin{prop}\label{prop:strict-free-couplings}
If $G$ and $H$ are  measure equivalent then they admit an ME-coupling which is strict and on which $G\times H$ acts freely and ergodically.
\end{prop}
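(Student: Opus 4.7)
The plan is to produce a strict, free, ergodic ME-coupling in three stages: first strictness, then freeness, and finally ergodicity.

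For strictness, I would use the defining property of an ME-coupling to pick an $H$-invariant probability measure $\nu'\in [\nu]$ and redefine $\tilde\eta := i_*(\lambda_G\times \nu')$, which makes $i$ strict by construction. Applying \cite[Lemma 2.10]{KKR17} to the non-singular $H$-isomorphism $j\colon (H\times X,\lambda_H\times \mu)\to (\Omega,\tilde\eta)$ then produces a Borel function $b\colon X\to\RR_+$ with $(j^{-1})_*\tilde\eta = \lambda_H \times b\mu$, so that $\mu':=b\mu\in [\mu]$ makes $j$ strict as well. The tuple $(\Omega,\tilde\eta,X,\mu',Y,\nu',i,j)$ is then a strict ME-coupling in which $\tilde\eta$ is both $G$- and $H$-invariant, hence $G\times H$-invariant.

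For freeness, I would choose a free pmp action of $G\times H$ on a standard Borel probability space $(Z,\zeta)$, which exists for every lcsc group (for example via a Gaussian action coming from the regular representation of $G\times H$), and form the diagonal product $(\Omega\times Z,\tilde\eta\times \zeta)$ equipped with the product $G\times H$-action. Freeness on $\Omega\times Z$ is inherited from freeness on the $Z$-factor. To verify that we still have an ME-coupling, one uses the cocycle formula $g\cdot j(h,x) = j(h \omega_H(g,x)^{-1}, g.x)$ to identify the $H$-quotient with $(X\times Z,\mu'\times \zeta)$ and check via Fubini that the induced near $G$-action, having the form $g\cdot (x,z) = (g.x, \omega_H(g,x) g z)$, preserves $\mu'\times \zeta$; the identification $(\Omega\times Z)/G\simeq (Y\times Z,\nu'\times \zeta)$ is analogous.

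For ergodicity, I would decompose the $H$-invariant probability measure $\nu'\times \zeta$ on $Y\times Z$ into its $H$-ergodic components $\nu'\times \zeta = \int_W \rho_w \d p_*(\nu'\times \zeta)(w)$. Under the natural bijection between $G\times H$-invariant Borel subsets of $\Omega\times Z$ and $H$-invariant Borel subsets of its $G$-quotient $Y\times Z$, this produces a decomposition of $\Omega\times Z$ into $G\times H$-invariant pieces, each of which is $G\times H$-ergodic: since $G$ acts essentially transitively along its own fibers, $H$-ergodicity on the $G$-quotient forces $G\times H$-ergodicity above. Lemma \ref{lem:eq-ergodic-fibers} ensures that the parallel $G$-ergodic decomposition of $\mu'\times \zeta$ on $X\times Z$ stays within the correct measure classes, so for almost every $w$ the restriction to the $w$-th piece is a genuine strict, free, ergodic $G$-$H$ ME-coupling.

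The hardest step should be the freeness construction: confirming that the diagonal product is actually an ME-coupling and not merely a free $G\times H$-space. This is where the explicit cocycle formula and a Fubini computation come in to match the induced $G$- and $H$-near actions on the quotients with the product measures $\mu'\times \zeta$ and $\nu'\times \zeta$ respectively.
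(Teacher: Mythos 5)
Your proposal follows essentially the same route as the paper: the strict and free stages are precisely the (non-unimodular adaptations of the) arguments of \cite[Theorem 2.8 and Proposition 2.13]{KKR17} that the paper invokes, and the ergodicity stage is the paper's argument of decomposing the invariant measures on $X$ and $Y$ into ergodic components and using Lemma \ref{lem:eq-ergodic-fibers} together with uniqueness of the ergodic decomposition to match up the components into genuine couplings. One small correction that does not affect the argument: for non-unimodular $G$ the measure $i_*(\lambda_G\times\nu')$ is $G$-invariant but in general \emph{not} $H$-invariant, since the $H$-action translates the $G$-coordinate on the right and therefore scales $\lambda_G$ by the modular function (compare Remark \ref{rem:tychomorphism}, where $\check{\lambda}_G$ is used for exactly this reason); fortunately your parenthetical claim of $G\times H$-invariance of $\tilde\eta$ is never used, as strictness only requires $i$ and $j$ to be equivariant Borel isomorphisms with the appropriate measures on $X$ and $Y$.
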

\begin{proof}
The proofs of \cite[Theorem 2.8 and Proposition 2.13]{KKR17} are easily adapted to the non-unimodular case, to show that one can obtain a coupling $(\Omega,\eta, X, \mu, Y,\nu, i,j)$ which is both strict and free.
We assume, as we may, that the  measures $\mu$ and $\nu$ are probability measures invariant under the actions of $G$ and $H$, respectively, and we furthermore fix probability measures $\tilde{\lambda}_G \in [\lambda_G]$ and $\tilde{\lambda}_H\in [\lambda_H] $. Decomposing $\mu$ and $\nu$ into ergodic, invariant probability measures as $\mu=\int_{Z'}\mu_{z'}\d\zeta'(z')$ and $\nu=\int_{Z''} \nu_{z''} d\zeta''(z'')$ \cite[Theorem 4.2]{varadarajan}
we also obtain ergodic decompositions of $i_*(\tilde{\lambda}_G\times \nu)=\int_{Z''} i_*(\tilde{\lambda}_G\times \nu_{z''}) \d\zeta''(z'')$ and $j_*(\tilde{\lambda}_H\times \mu)=\int_{Z'} j_*(\tilde{\lambda}_H\times \mu_{z'}) \d\zeta'(z')$. Since $i_*(\tilde{\lambda}_G\times \nu)\sim j_*(\tilde{\lambda}_H\times \mu)$, Lemma \ref{lem:eq-ergodic-fibers} at the same time provides us with decompositions of  $i_*(\tilde{\lambda}_G\times \nu)$ and  $ j_*(\tilde{\lambda}_H\times \mu)$ over a common standard Borel probability space $(Z,\zeta)$, and by the uniqueness of the ergodic decomposition \cite[Theorem V.1]{Dang-decomposition} we conclude that for almost all $z'\in Z'$ and almost all $z''\in Z''$ we have $i_*(\tilde{\lambda}_G\times \nu_{z''}) \sim j_*(\tilde{\lambda}_H\times \mu_{z'})$; thus, endowing $X$ and $Y$ with such $\mu_{z'}$ and $\nu_{z''}$ and $\Omega$ with $j_*(\tilde{\lambda}_H\times \mu_{z'})$ provides the strict, free, ergodic coupling.\end{proof}

Note that when $\Omega$ is a strict measure equivalence coupling then the near actions and their associated measurable cocycles are genuine actions and cocycles defined by the relations
\[
\omega_{G}(h,y)^{-1}:=\pi_G(i^{-1}(h.i(e_G, y))) \ \text{ and } \ h.y:=\pi_Y(i^{-1}(h.i(e_G, y))),
\]
and similarly for the $G$-action and its cocycle.  Moreover, when the action $G\times H\curvearrowright (\Omega,\eta)$ is either essentially free or ergodic then the same holds true for the induced actions $G\curvearrowright (X,\mu)$ and $H\curvearrowright(Y,\nu)$, and vice versa.\\

The following theorem extends \cite[Theorem 3.8]{KKR17}.  We restrict attention to non-discrete groups, but remark that since no unimodular lcsc group can be measure equivalent to a non-unimodular one (see Corollary \ref{cor:unimod-vs-non-unimod} below), Theorem  \ref{thm:me-and-soe-equivalence} together with \cite[Theorem 3.8]{KKR17} completely covers the class of lcsc groups.

\begin{thm}\label{thm:me-and-soe-equivalence}
For non-discrete, lcsc groups $G$ and $H$ the following are equivalent.
\begin{itemize}
\item[(i)] $G$ and $H$ are measure equivalent.
\item[(ii)] $G $ and $H $ admit  orbit equivalent, essentially free, ergodic,  pmp actions on standard Borel probability spaces.
\item[(iii)] $G$ and $H$ admit essentially free, ergodic pmp actions on standard Borel probability spaces for which the cross section equivalence relations associated with some choice of cross sections are stably orbit equivalent.
\end{itemize}
Moreover, in (iii) the word `some' may be replaced with `any'.
\end{thm}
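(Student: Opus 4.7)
We aim to close the cycle $(i)\Rightarrow(ii)\Rightarrow(iii)\Rightarrow(i)$, with the \emph{some/any} statement falling out as a corollary. The argument is organized around the measure-groupoid framework recalled in the preliminaries and the cross-section decomposition $G\ltimes X\simeq \T\times \R_{X_0}$ from Section~\ref{subsec:cross-sections}, which together convert all comparisons of actions into comparisons of their action groupoids.

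For $(i)\Rightarrow(ii)$, the first step is to invoke Proposition~\ref{prop:strict-free-couplings} to pass to a strict, free, ergodic $G$-$H$ coupling $(\Omega,\eta,X,\mu,Y,\nu,i,j)$; this upgrades the near-actions on $X$ and $Y$ to genuine essentially free ergodic pmp actions. Using the change-of-coordinates Borel isomorphism $j^{-1}\circ i:G\times Y\to H\times X$ together with the cocycle identities defining $\omega_G$ and $\omega_H$, the bibundle structure on $\Omega$ assembles into a strict measure-groupoid isomorphism $G\ltimes X\simeq H\ltimes Y$, and Lemma~\ref{lem:OE-and-iso}(i) then converts this into the desired orbit equivalence of pmp actions.

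For $(ii)\Leftrightarrow(iii)$, we read the action groupoids through the cross-section decomposition. In the forward direction, OE of the actions yields $G\ltimes X\simeq H\ltimes Y$ by Lemma~\ref{lem:OE-and-iso}(i), and inserting $G\ltimes X\simeq \T\times\R_{X_0}$ and $H\ltimes Y\simeq \T\times\R_{Y_0}$ produces $\T\times\R_{X_0}\simeq \T\times\R_{Y_0}$: this is precisely similarity of the discrete relations $\R_{X_0}$ and $\R_{Y_0}$, hence SOE by the discrete case of Lemma~\ref{lem:OE-and-iso}(iii). The converse reverses the same sequence, with Lemma~\ref{lem:OE-and-iso}(ii) providing the similarity and Lemma~\ref{lem:OE-and-iso}(i) recovering the OE of the actions. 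To close the cycle via $(ii)\Rightarrow(i)$, we construct the Furman-type coupling on $\Omega:=X\times H$ equipped with the $G$-action $g\cdot(x,h)=(gx,\alpha(g,x)h)$ coming from the OE cocycle $\alpha:G\times X\to H$, right $H$-translation on the second factor, and the $\sigma$-finite measure class $[\mu\times \lambda_H]$; then the $H$-equivariant quotient map $\phi:(x,h)\mapsto h^{-1}\Delta(x)$ identifies $G\backslash \Omega$ with $Y$ and transports $\nu$ to an $H$-invariant probability measure on this quotient, completing the desired strict ME coupling.

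Finally, for the \emph{some/any} statement, any two cross sections $X_0,X_0'$ of the same action satisfy $\R_{X_0}\times\T\simeq G\ltimes X\simeq \R_{X_0'}\times\T$, so $\R_{X_0}$ and $\R_{X_0'}$ are automatically similar and hence SOE by Lemma~\ref{lem:OE-and-iso}(iii); combined with transitivity of SOE, this propagates a single SOE relation in (iii) to every pair of cross sections. The principal technical hurdle will be $(i)\Rightarrow(ii)$: verifying that the bibundle $\Omega$ really implements a measure-class-preserving strict isomorphism of groupoids in the non-unimodular setting requires delicate bookkeeping of the asymmetry between the modular functions $\Delta_G$ and $\Delta_H$ when comparing the push-forwards of $\lambda_G\times\nu$ and $\lambda_H\times\mu$ under the two trivializations, a subtlety absent from the unimodular treatment of the corresponding result \cite[Theorem~3.8]{KKR17}.
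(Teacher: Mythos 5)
Your treatment of (ii)$\Leftrightarrow$(iii) and of the ``some/any'' refinement matches the paper's argument, and your (i)$\Rightarrow$(ii) follows the paper's route in spirit. As written, though, the map $j^{-1}\circ i\colon G\times Y\to H\times X$ does not map the underlying space of $G\ltimes X$ to that of $H\ltimes Y$, so it cannot itself be the claimed groupoid isomorphism; the paper instead identifies the ambient groupoid $\L=(G\times H)\ltimes\Omega$ with both $\T\times\G$ and $\T\times\H$, concludes that $\G$ and $\H$ are merely \emph{similar}, and only then upgrades similarity to isomorphism via \cite[Corollary 5.8]{feldman-hahn-moore}. That last step is precisely where non-discreteness of both groups enters, and it is absent from your sketch.

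The genuine gap is in (ii)$\Rightarrow$(i), which you dispatch with the classical Furman coupling $\Omega=X\times H$ and the remark that $(x,h)\mapsto h^{-1}\Delta(x)$ identifies $G\backslash\Omega$ with $Y$. For non-discrete $G$ an identification of quotients is not what Definition \ref{def:ME} requires: you must produce a \emph{Borel isomorphism} $i\colon (G\times Y,\lambda_G\times\nu')\to(\Omega,\eta)$ intertwining left translation on the first leg with the $G$-action on $\Omega$ and matching the measure classes. Each fibre of your quotient map is a single $G$-orbit, hence a null set, so there is no fundamental domain and no naive disintegration of $\mu\times\lambda_H$ over $Y$; moreover, comparing $\lambda_G$ pushed along an orbit with $\lambda_H$ involves the Radon--Nikodym cocycle of the actions together with both modular functions. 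This is exactly the point at which the paper abandons the spatial picture: it dualizes the strict groupoid isomorphism to $\Phi^*\colon\Linfty(H\times Y)\to\Linfty(G\times X)$, verifies that the actions $\Cdot$ and $\triangleright$ of $G$ and $H$ commute appropriately, and invokes Mackey's point-realization theorem \cite[Theorem 2]{mackey-point-realization} to obtain, on conull invariant subsets, the equivariant Borel isomorphism $\varphi$ that furnishes the missing trivialization and the invariant measures $\Delta_*\mu$ and $\Delta^{-1}_*\nu$. You do flag ``delicate bookkeeping'' as the principal hurdle, but you locate it in (i)$\Rightarrow$(ii), where the paper's argument is in fact soft; the real work, unaddressed in your proposal, is the construction of $i$ in (ii)$\Rightarrow$(i).
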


\begin{proof}
Assuming $G$ and $H$ measure equivalent, we may, by Proposition \ref{prop:strict-free-couplings},  choose a strict, free, ergodic measure coupling $(\Omega, \eta, X,\mu, Y,\nu, i,j)$ so that the induced measure preserving actions  $G\curvearrowright (X,\mu)$ and $H\curvearrowright (Y,\nu)$ are then also free and ergodic.  
Put $\G=G\ltimes X$, $\H=H\ltimes Y$ and $\L=(G\times H)\ltimes \Omega$. The map 
\[
\Omega \times\Omega \xrightarrow{j^{-1}\times j^{-1}}  H\times X \times H \times X  \xrightarrow{\id_H \times \sigma \times \id_X}  H\times H \times X \times X
\]
(where $\sigma$ denotes  the coordinate flip) conjugates the orbit equivalence relation $\R_{G\times H\curvearrowright \Omega}$ onto a direct product of the transitive relation on $H$ and $\R_{G\curvearrowright X}$, and we therefore get an isomorphism $\L\simeq \T \times \G$ and, by symmetry, also an isomorphism $\L\simeq \T \times \H$. Thus, $\G$ and $\H$ are  similar and since both have uncountable orbits, \cite[Corollary 5.8]{feldman-hahn-moore} implies that $\G$ and $\H$ are isomorphic and the two actions $G\curvearrowright (X,\mu)$ and $H\curvearrowright (Y,\nu)$ are therefore orbit equivalent by Lemma \ref{lem:OE-and-iso}; thus (i) implies (ii). Assuming the existence of orbit equivalent, essentially free, ergodic actions $G\curvearrowright (X,\mu)$ and $H\curvearrowright (Y,\nu)$, Lemma \ref{lem:OE-and-iso} shows that the associated action groupoids $\G$ and $\H$ are isomorphic,
and upon 
choosing cross sections $X_0\subset X$ and $Y_0\subset Y$ for the two actions,  \cite[Theorem 5.6]{feldman-hahn-moore} (and its proof) shows that $\G\simeq \T\times \R_{X_0}$ and $\H\simeq \T\times \R_{Y_0}$, and hence $\R_{X_0}$ is similar to $\R_{Y_0}$.  Since both are countable equivalence relations, \cite[Proposition 4.14 and Theorem 4.12]{feldman-hahn-moore} show that $\R_{X_0}\times \Z \simeq \R_{Y_0}\times \Z$ and since the actions $G\curvearrowright (X,\mu)$, $H\curvearrowright (Y,\nu)$ are ergodic so are $\R_{X_0}$, $\R_{Y_0}$ and it therefore follows from \cite[Theorem 3]{FM1} that $\R_{X_0}$ and $\R_{Y_0}$ are SOE. Thus, (ii) implies that (iii) holds for any choice of cross sections.
Assume now that $G$ and $H$ admit essentially free, ergodic pmp actions $G\curvearrowright (X,\mu)$ and $H\curvearrowright (Y,\nu)$ for which there exist cross sections $X_0\subset X$ and $Y_0\subset Y$ for which the associated cross section equivalence relations are SOE. This means that $\R_{X_0} \times \Z \simeq \R_{Y_0}\times \Z$ and since $\Z\times \T\simeq \T$,    we now get (using again \cite[Theorem 5.6]{feldman-hahn-moore}) that:
\[
G\ltimes X \simeq \R_{X_0}\times \T \simeq \R_{X_0} \times \Z\times \T\simeq \R_{Y_0}\times \Z\times \T\simeq \R_{Y_0}\times \T\simeq H \ltimes Y,
\]
so by Lemma \ref{lem:OE-and-iso} the two actions  $G\curvearrowright (X,\mu)$ and $H\curvearrowright (Y,\nu)$ are OE; thus (iii) implies (ii). \\

Finally, we need to prove that (ii) implies (i).  
Let $G\curvearrowright (X,\mu)$ and $H\curvearrowright (Y,\nu)$ be orbit equivalent essentially free, ergodic pmp actions. By \cite[Lemma 10]{stefaan-sven-niels} we may assume that both actions are  free, and by Lemma \ref{lem:OE-and-iso} (and its proof) we get a strict isomorphism $\Phi\colon G\ltimes X\restriction_{X_0} \to H\ltimes Y\restriction_{Y_0} $ between inessential reductions, where $X_0, Y_0$ are the conull subsets from the definition of OE; more precisely,  $\Phi(g,x)=(c(g,x), \Delta(x))$ where $\Delta$ is the orbit equivalence and $c$ satisfies the cocycle relation whenever this makes sense.
The map $\Phi$ therefore dualizes to a isomorphism of von Neumann algebras $\Phi^*\colon \Linfty(H\times Y) \to \Linfty(G\times X)$, whose inverse at the spatial level, $\Psi \colon H\ltimes Y\restriction_{Y_0}\to G\ltimes X\restriction_{X_0}$,   is given by  by the formula $\Psi(h,y)=(d(h,y), \Delta^{-1}(y))$ for a cocycle $d$; cf.~\cite[Lemma 3.11]{KKR17} for more details.
We now define two measurable left actions of $G$ on $G \times X$ and of $H$ on $H \times Y$ by setting
\begin{align*}
  g' \Cdot (g,x) & := (g'g, x) \ \text{ and } \ g'\triangleright (g,x) := (g g'^{-1}, g'.x)
           && \quad \text{for all } g',g \in G \text{ and } x \in X, \\
h'\Cdot (h, y)  & := ( h' h ,y)   \ \text{ and } \         h'\triangleright  (h, y) := ( h h'^{-1} , h'. y)           &&  \quad \text{for all } h',h \in H \text{ and } y \in Y\text{.}
\end{align*}
Denoting by $I_G$ the groupoid inversion in $G\ltimes X$ given by $I_G(g,x)=(g^{-1},g.x)$ we see that $I_G(g'\Cdot (g,x))=g'\triangleright I_G(g,x)$ and since the $(G,\Cdot)$-action clearly preserves $\lambda_G\times \mu$, the $(G,\triangleright$)-action is non-singular, and similarly for the actions of $H$ on $H\times Y$.  The four actions therefore induce actions at the level of $\Linfty$-algebras and through the isomorphism $\Phi^*$ we therefore obtain induced actions
\[
G\overset{\Cdot}{\curvearrowright} \Linfty(H\times Y), \ H\overset{\Cdot}{\curvearrowright} \Linfty(G\times X), \  G\overset{\triangleright}{\curvearrowright} \Linfty(H\times Y) \ \text{ and } H\overset{\triangleright}{\curvearrowright} \Linfty(G\times X).
\]
A direct computation reveals that  for $h\in H$ and $f\in \Linfty(G\times X)$ we have
\[
(h\triangleright f) (g,x)= f\left((g d(h^{-1}, \Delta(x))^{-1}, d(h^{-1}, \Delta(x)). x)\right), \quad (g,x)\in G\times X,
\]
and from this it follows that the $\Cdot$-action of $G$ and the $\triangleright$-action of $H$ on $\Linfty(G\times X)$ commute,  and similarly for the corresponding actions on $\Linfty(H\times Y)$.  
We therefore obtain actions
\[
G\times H\overset{\Cdot \times \triangleright}{\curvearrowright} \Linfty(G\times X) \ \text{ and } \ G\times H \overset{\triangleright \times \Cdot }{\curvearrowright} \Linfty(H\times Y) 
\]
 and $\Phi^*\circ I_H^*\colon \Linfty(H\times Y)\to \Linfty(G\times X)$  intertwines the $(G\times H, \triangleright \times \Cdot)$-action on $\Linfty(H\times Y)$ with the $(G\times H, \Cdot\times \triangleright)$-action on $\Linfty(G\times X)$.   
 By \cite[Lemma 3.2]{ramsey} these actions (pre-)dualize to genuine actions at the level of spaces (denoted again by $\Cdot$ and $\triangleright$) and
 by \cite[Theorem 2]{mackey-point-realization} there exist a $(G\times H, \Cdot \times \triangleright )$-invariant, conull, Borel subset $\Omega_0 \subset G \times X$ and a  $(G\times H,  \triangleright \times \Cdot)$-invariant conull Borel subset  $\Omega_0' \subset H \times Y$ and a $G\times H$-equivariant Borel isomorphism $\varphi\colon \Omega_0 \rightarrow \Omega_0'$ which dualizes to $\Phi^*\circ I_H^*$.  Since $\Omega_0$ is invariant under the $(G,\Cdot)$-action, it is of the form $G \times X_1$ for some  conull Borel subset $X_1 \subset X$ and since $\Omega_0'$ is invariant for the $(H,\Cdot)$-action it is of the form $H\times Y_1$ for 
 some conull Borel subset $Y_1\subset Y$.
Thus defining $(\Omega, \eta):=(G\times X_1, \lambda_G\times \mu)$ with the $(G\times H, \Cdot\times \triangleright)$-action provides us with a measure correspondence since the map $\varphi^{-1}\colon (H \times Y_1, \lambda_H\times \nu) \to (\Omega, \eta)$ is a non-singular Borel isomorphism which intertwines the $(H,\Cdot)$-action on $H\times Y_1$ with the $(H,\triangleright)$-action on $\Omega.$ To show that $\Omega$ is indeed a measure equivalence coupling, we therefore need to find invariant probability measures in $[\mu]$ and $[\nu]$ for the induced actions of $H$ and $G$, respectively. However, since $\varphi$ intertwines the two $G$-actions, for $g\in G$ and almost all $(h,y)\in H\times Y_1$ we get
 \begin{align*}
 \varphi(g\Cdot \varphi^{-1}(h,y))&= g\triangleright (h,y)\\
 &=\left(hc\left(g,\Delta^{-1}(y)\right)^{-1}, c\left(g,\Delta^{-1}(y)\right).y \right)\\
 &=\left(hc\left(g,\Delta^{-1}(y)\right)^{-1}, \Delta(g.\Delta^{-1}(y)) \right).
 \end{align*}
The induced action $G\curvearrowright Y_1$ is therefore given  by $g.y= \Delta(g.\Delta^{-1}(y))$, and since
the original action $G\curvearrowright (X,\mu)$ is pmp, $\nu':=\Delta_*(\mu)\sim \nu$ is invariant for induced action; similarly, we get that $\mu':=\Delta^{-1}_*\nu\sim \mu$ is invariant for the induced action   of $H$ on $X_1$; 
thus, $(\Omega, \eta)$ is a measure equivalence coupling.

\end{proof}

\begin{cor}\label{cor:unimod-vs-non-unimod}
No unimodular group is measure equivalent to a non-unimodular  group. 
\end{cor}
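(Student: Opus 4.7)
The plan is to derive a contradiction by combining Proposition \ref{prop:strict-free-couplings} with the coordinate-flip computation from the proof of Theorem \ref{thm:me-and-soe-equivalence} and Lemma \ref{lem:unimod-of-groupoid}. First note that any non-unimodular lcsc group is automatically non-discrete and non-compact, so if I assume toward a contradiction that a unimodular $G$ is measure equivalent to a non-unimodular $H$, then $H$ is in particular non-discrete. Applying Proposition \ref{prop:strict-free-couplings}, I would pass to a strict, free, ergodic coupling $(\Omega, \eta, X, \mu, Y, \nu, i, j)$ with $\mu$ and $\nu$ chosen $G$- and $H$-invariant probability measures, so that $G\curvearrowright (X,\mu)$ and $H\curvearrowright (Y,\nu)$ are essentially free, ergodic and pmp.

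Next I would reuse the coordinate-flip computation from the first paragraph of the proof of Theorem \ref{thm:me-and-soe-equivalence}. Setting $\G := G\ltimes X$ and $\H := H\ltimes Y$, conjugation via $j^{-1}\times j^{-1}$ followed by the middle-coordinate flip identifies $(G\times H)\ltimes \Omega$ with $T_H \times \G$, and symmetrically with $T_G \times \H$, where $T_G$ and $T_H$ denote the transitive measure groupoids on $G$ and $H$. Hence $T_H\times \G \simeq T_G\times \H$. Since $H$ is non-discrete, $T_H\simeq \T$, while $T_G$ is either $\T$ or $\Z$; using the absorption isomorphisms $\T\times \T \simeq \T$ and $\Z\times \T \simeq \T$ and multiplying both sides by $\T$, this isomorphism upgrades to the similarity
\[
\T\times \G \;\simeq\; \T\times \H .
\]

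To close the argument, I would verify the elementary fact that a product of two measure groupoids is unimodular if and only if each factor is: with respect to a product Haar measure the modular function of the product factors, and a brief Fubini argument extracts factor-level coboundaries from a coboundary on the product. Since $\T$ is unimodular by Lemma \ref{lem:unimod-of-groupoid} applied to the pmp action $\rS^1\curvearrowright \rS^1$, the displayed similarity forces $\G$ to be unimodular if and only if $\H$ is. Finally, Lemma \ref{lem:unimod-of-groupoid} applied to $G\curvearrowright (X,\mu)$ and $H\curvearrowright (Y,\nu)$ yields the equivalences $G$ unimodular $\Leftrightarrow$ $\G$ unimodular and $H$ unimodular $\Leftrightarrow$ $\H$ unimodular, contradicting the hypotheses on $G$ and $H$. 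The main obstacle will be the product-groupoid unimodularity step: while elementary, the Fubini argument needed to extract coboundaries on factors from a coboundary on the product requires a short but non-trivial verification not spelled out in the excerpt.
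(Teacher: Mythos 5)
Your proposal is correct in outline, but it takes a genuinely different route from the paper's. The paper first reduces to the case where both groups are non-discrete (replacing a discrete $G$ by $G\times\rS^1$, to which it is measure equivalent by Remark \ref{rem:ME-defi-comparison}), then invokes the implication (i)$\Rightarrow$(ii) of Theorem \ref{thm:me-and-soe-equivalence} to produce orbit equivalent, essentially free, ergodic pmp actions, so that the action groupoids are actually \emph{isomorphic} by Lemma \ref{lem:OE-and-iso}, and Lemma \ref{lem:unimod-of-groupoid} then gives the contradiction. You instead stop at the \emph{similarity} $\T\times\G\simeq\T\times\H$ extracted directly from a strict, free, ergodic coupling and argue that unimodularity is a similarity invariant. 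What your route buys: it bypasses Theorem \ref{thm:me-and-soe-equivalence} entirely (whose hypotheses exclude discrete and compact groups), hence needs no reduction step and no appeal to the similarity-to-isomorphism upgrade of \cite[Corollary 5.8]{feldman-hahn-moore}. What it costs: you must supply the lemma that $\T\times\G$ is unimodular if and only if $\G$ is. This is true, and your Fubini strategy does work --- from $\Delta_{\G}(\gamma_2)=F(r\gamma_1,r\gamma_2)^{-1}F(s\gamma_1,s\gamma_2)$ almost everywhere one compares two values of $\gamma_1$ against the same $\gamma_2$ to conclude that $F(\cdot,w)$ is essentially constant for almost every $w\in U_{\G}$, and a Borel choice of that essential value witnesses unimodularity of $\G$ --- but this verification is not in the paper and genuinely needs the care you flag. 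Two small points to patch: if $G$ is finite, $T_G$ is neither $\T$ nor $\Z$, yet $T_G\times\T$ is still transitive with uncountable orbits and hence isomorphic to $\T$, so your absorption step survives; and the coordinate-flip identification implicitly uses freeness of the coupling (which Proposition \ref{prop:strict-free-couplings} provides) to pass between action groupoids and orbit equivalence relations.
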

\begin{proof}
Let $G$ and $H$ be lcsc groups, the former unimodular and  the latter non-unimodular, and assume that $G$ and $H$ are ME. If $G$ is discrete, then $G$ is a cocompact, normal  subgroup in the non-discrete unimodular group $G\times \mathrm{S}^1$, and thus measure equivalent to it by Remark \ref{rem:ME-defi-comparison}, so we may assume that both groups are non-discrete. Theorem \ref{thm:me-and-soe-equivalence} then provides us with orbit equivalent, essentially free, ergodic pmp actions $G\curvearrowright (X,\mu)$ and $H\curvearrowright (Y,\nu)$ and the associated action groupoids $G\ltimes X$ and $H\ltimes Y$ are therefore isomorphic by Lemma \ref{lem:OE-and-iso}. However,  by Lemma \ref{lem:unimod-of-groupoid}, $G\ltimes X$ is unimodular and $H\ltimes Y$ is not, and hence they cannot be isomorphic.
\end{proof}

Orbit equivalence for pmp actions of topological groups was also treated in \cite{carderi-le-maitre}, with the difference that the map witnessing the OE is required to be measure preserving, and not just non-singular.  As the following corollary shows, the two definitions agree on the class of unimodular lcsc groups. For countable discrete groups, this is well known and contained, for instance, in \cite[Remark 2.1]{Furman-OE-rigidity}. 

\begin{cor}\label{cor:mp-OE}
If $G$ and $H$ are unimodular lcsc groups with orbit equivalent, essentially free, ergodic, pmp actions $G\curvearrowright (X,\mu)$ and $H\curvearrowright (Y,\nu)$ on standard Borel probability spaces then there exists an orbit equivalence which is measure preserving.
\end{cor}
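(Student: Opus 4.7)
My plan is to reduce the problem to the cross-section equivalence relations $\R_{X_0}$ and $\R_{Y_0}$, which will be pmp under the unimodularity assumption, and then exploit the uniqueness of the invariant probability measure in ergodic settings to pass from a measure-class-preserving orbit equivalence to a genuinely measure-preserving one. If both $G$ and $H$ are discrete, the statement is classical: the given orbit equivalence $\Delta$ conjugates two ergodic pmp countable Borel equivalence relations, and uniqueness of the invariant probability measure in a prescribed measure class immediately forces $\Delta_*\mu=\nu$. The mixed discrete/non-discrete case cannot occur, since by Lemma \ref{lem:OE-and-iso} orbit equivalence of the actions implies isomorphism of the action groupoids, and the ``discrete vs.~non-discrete'' nature of the groupoid is an invariant. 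So I focus on the case when both $G$ and $H$ are non-discrete.

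After replacing the actions by free ones as in the proof of Theorem \ref{thm:me-and-soe-equivalence}, I would choose cross sections $X_0\subset X$ and $Y_0\subset Y$ with associated probability measures $\mu_0,\nu_0$ as in Subsection \ref{subsec:cross-sections}, so that we have $G\ltimes X\simeq \T\times \R_{X_0}$ and $H\ltimes Y\simeq \T\times \R_{Y_0}$. Lemma \ref{lem:unimod-of-groupoid} makes both $G\ltimes X$ and $H\ltimes Y$ unimodular, and because $\T$ is unimodular, so are $\R_{X_0}$ and $\R_{Y_0}$; for countable Borel equivalence relations this just amounts to being pmp with respect to $\mu_0$ and $\nu_0$. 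Following the argument of the implication (ii)$\Rightarrow$(iii) in Theorem \ref{thm:me-and-soe-equivalence}, the given orbit equivalence of the original actions yields a stable orbit equivalence $\phi\colon A\to B$ between $\R_{X_0}$ and $\R_{Y_0}$. Since $\phi$ conjugates the ergodic pmp restrictions $\R_{X_0}\restriction_A$ and $\R_{Y_0}\restriction_B$, uniqueness of the invariant probability measure in the class of $\nu_0\restriction_B$ forces $\phi_*(\mu_0\restriction_A)=\tfrac{\mu_0(A)}{\nu_0(B)}\nu_0\restriction_B$; in other words, $\phi$ is measure preserving up to a positive scalar.

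Finally, I would lift this scalar-measure-preserving SOE back up through the chain
\[
G\ltimes X\simeq \T\times \R_{X_0}\simeq \T\times \Z\times \R_{X_0}\simeq \T\times \Z\times \R_{Y_0}\simeq \T\times \R_{Y_0}\simeq H\ltimes Y,
\]
where the inner identification comes from $\phi$ and the outer identifications are the cross-section decompositions from Subsection \ref{subsec:cross-sections}. The resulting isomorphism of action groupoids yields, via Lemma \ref{lem:OE-and-iso}, an orbit equivalence of the original actions whose underlying map pushes $\mu$ to a positive scalar multiple of $\nu$; since both are probability measures the scalar must be $1$, and the orbit equivalence is measure preserving. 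The main technical obstacle is to verify that the Feldman--Hahn--Moore decomposition $G\ltimes X\simeq \T\times \R_{X_0}$ transports $\mu$ precisely to $\lambda_{\rS^1}\times \mu_0$ (at worst up to a scalar) rather than only to a measure in the same class; this is implicit in the normalization of $\mu_0$ in Subsection \ref{subsec:cross-sections}, but must be made explicit so that the scalar-mp property propagates through the chain to the final orbit equivalence on $(X,\mu)$ and $(Y,\nu)$.
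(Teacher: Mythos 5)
Your overall strategy (reduce to the cross-section relations, use ergodicity to get measure preservation there, lift back) is the same as the paper's, and your treatment of the discrete and mixed cases is fine. But the final lifting step has a genuine gap, which you flag yourself but do not close, and it is precisely where the content of the corollary lies. Lemma \ref{lem:OE-and-iso} and the Feldman--Hahn--Moore decompositions $G\ltimes X\simeq \T\times\R_{X_0}$ are statements about \emph{measure classes} only: an isomorphism of measure groupoids pushes the Haar measure class to the Haar measure class, and nothing more. So the chain of isomorphisms you write down produces an orbit equivalence with $\Delta_*\mu\sim\nu$, not $\Delta_*\mu=c\,\nu$. Moreover, you cannot repair this by an abstract uniqueness-of-invariant-measure argument on $(X,\mu)$ and $(Y,\nu)$: Remark \ref{rem:mp-of-action-vs-relation} shows that for non-discrete groups measure preservingness of an action is \emph{not} an invariant of its orbit equivalence relation, so the Radon--Nikodym derivative of $\Delta_*\mu$ with respect to $\nu$ need not be invariant under the $H$-action and ergodicity gives you nothing. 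The "scalar-mp property" simply does not propagate through the FHM machinery for free; it has to be built into an explicit map. The paper closes exactly this gap by invoking \cite[Theorem 1.15]{carderi-le-maitre} (and its proof), which in the unimodular case produces an honestly \emph{measure-preserving} orbit equivalence between $G\curvearrowright(X,\mu)$ and $\rS^1\times\Gamma\curvearrowright(\rS^1\times X_0,\lambda_{\rS^1}\times\mu_0)$; composing two such maps with $\id\times\Delta_0$, where $\Delta_0$ is the pmp orbit equivalence of the cross-section relations obtained from the discrete case, finishes the proof.

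Two smaller points. First, your derivation that $\R_{X_0}$ is pmp from unimodularity of $\T\times\R_{X_0}$ is too quick: unimodularity of a countable relation only yields an invariant $\sigma$-finite measure in the class of $\mu_0$, not that $\mu_0$ itself (or any finite measure) is invariant; the paper instead takes the canonical invariant \emph{probability} measures directly from \cite[Proposition 4.3]{KPV}. Second, rather than carrying a compression constant through an SOE $\phi\colon A\to B$, it is cleaner to do what the paper does: pass to the sub-cross-sections $A\subset X_0$ and $B\subset Y_0$ (any positive-measure Borel subset of a cross section is again a cross section, by ergodicity), so that $\phi$ becomes an honest orbit equivalence of the new cross-section relations and the discrete case applies verbatim; the normalization constants then cancel at the end because all measures involved are probability measures.
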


\begin{proof}
  First note that if one group is countable discrete then so is the other in which case the claim follows since $\Delta_*(\mu)$ is another probability measure in $[\nu]$ which is invariant with respect to the countable equivalence relation $\S$ generated by $H \curvearrowright (Y, \nu)$ and hence the Radon-Nikodym derivative $\frac{\d\Delta_*(\mu)}{\d\nu}$ is $\S$-invariant too, and thus constant by ergodicity. We may therefore assume that both groups are non-discrete. Pick cross sections $X_0$ and $Y_0$ for the two actions and endow them with the canonical probability measures $\mu_0$ and $\nu_0$ with respect to which the cross section equivalence relations are pmp and ergodic  \cite[Proposition 4.3]{KPV}. By Theorem \ref{thm:me-and-soe-equivalence},    
$\R_{X_0}$ and $\R_{Y_0}$ are SOE, and by passing to suitable subsets of $X_0$ and $Y_0$ we obtain new cross sections (by ergodicity of the actions), so we may as well assume that $\R_{X_0}$ and $\R_{Y_0}$ are orbit equivalent.  By the first part of the proof, this orbit equivalence $\Delta_0\colon X_0\to Y_0$ is therefore pmp, and by choosing countable groups $\Gamma$ and $\Lambda$ implementing $\R_{X_0}$ and $\R_{Y_0}$, respectively, it follows from  \cite[Theorem 1.15]{carderi-le-maitre} and its proof that
$
G\curvearrowright (X,\mu)$ is orbit equivalent with
\[
 \rS^1\times \Gamma \curvearrowright (\rS^1\times X_0, \lambda_{\rS^1}\times \mu_0 ),
\]
through a measure preserving OE, and similarly for the $H$-action. Since $\id\times \Delta_0$ is a measure preserving OE between
\[
 \rS^1\times \Gamma \curvearrowright (\rS^1\times X_0, \lambda_{\rS^1}\times \mu_0 ) \ \text{ and } \   \rS^1\times \Lambda \curvearrowright (\rS^1\times Y_0, \lambda_{\rS^1}\times \nu_0 )
\]
we conclude that the original actions also admit an OE which is measure preserving.
\end{proof}

\section{Measure equivalence of amenable groups}
In \cite[Theorem 6.1]{deprez-li-ME} it is proven that weak amenability, the Haagerup property and the weak Haagerup property  are all preserved by measure equivalence, and in \cite{KKR17} the authors prove that the same is true for Kazhdan's property (T),  and  for amenability under the additional assumption of  unimodularity. 
The aim of this section is to complete the classification of lcsc amenable groups up to measure equivalence, which is the content of Theorem \ref{thm:ME-classification-of-amenable} below. As a first step in this direction, we start out by proving that the class of amenable groups is closed under measure equivalence.  This is a rather direct consequence of Theorem \ref{thm:me-and-soe-equivalence} and \cite[Proposition 5.10]{zimmer-82} in conjunction, but since the latter is stated without proof, we give a short and self-contained proof for the benefit of the reader.

\begin{prop}\label{prop:amenability-transfers}
Measure equivalence preserves amenability.  
\end{prop}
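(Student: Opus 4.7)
The approach is to split into cases according to unimodularity and then transfer hyperfiniteness of the cross section equivalence relations across the orbit equivalence produced by Theorem \ref{thm:me-and-soe-equivalence}. First I would invoke Corollary \ref{cor:unimod-vs-non-unimod} to observe that if $G$ is amenable and $H$ is measure equivalent to $G$, then $G$ and $H$ are either both unimodular or both non-unimodular. In the unimodular case the conclusion is already contained in \cite{KKR17}, so I would reduce to the case where both $G$ and $H$ are non-unimodular. In particular both are then automatically non-discrete and non-compact, since every discrete or compact lcsc group is unimodular, placing us squarely in the setting covered by Theorem \ref{thm:me-and-soe-equivalence} and Proposition \ref{prop:amenable-and-injective}.

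Next, I would apply Theorem \ref{thm:me-and-soe-equivalence} to produce orbit equivalent, essentially free, ergodic, pmp actions $G \curvearrowright (X,\mu)$ and $H \curvearrowright (Y,\nu)$, pick cross sections $X_0 \subset X$ and $Y_0 \subset Y$, and conclude from the same theorem that the associated cross section equivalence relations $\R_{X_0}$ and $\R_{Y_0}$ are stably orbit equivalent. Since $G$ is amenable and non-discrete, Proposition \ref{prop:amenable-and-injective} gives that $\R_{X_0}$ is hyperfinite. The crux of the argument is then that hyperfiniteness of a countable non-singular Borel equivalence relation is invariant under stable orbit equivalence; granting this, $\R_{Y_0}$ is hyperfinite, and Proposition \ref{prop:amenable-and-injective} applied in the other direction to $H$ yields the amenability of $H$.

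To justify the transfer of hyperfiniteness, I would appeal to \cite{Connes-Feldman-Weiss}, which identifies hyperfiniteness of a countable non-singular Borel equivalence relation with amenability of the relation, a property which is manifestly preserved under stable orbit equivalence. Alternatively, since stably orbit equivalent countable relations have stably isomorphic von Neumann algebras, injectivity of $L(\R_{X_0})$ transfers to injectivity of $L(\R_{Y_0})$, and Proposition \ref{prop:amenable-and-injective} converts this back to hyperfiniteness. The main delicate point I anticipate is that, as pointed out in Remark \ref{rem:mp-of-action-vs-relation}, the cross section equivalence relations need not be probability measure preserving in the non-unimodular setting, so one cannot simply invoke the classical pmp form of Connes--Feldman--Weiss; however, both the injectivity criterion and the amenability criterion for hyperfiniteness are available in the non-singular generality needed here, which is enough to close the argument.
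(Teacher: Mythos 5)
Your argument is correct, and it rests on the same two pillars as the paper's proof: Theorem \ref{thm:me-and-soe-equivalence} and Proposition \ref{prop:amenable-and-injective}. The differences are worth recording. For the degenerate cases you split along the unimodularity dichotomy of Corollary \ref{cor:unimod-vs-non-unimod}, dispose of the entire unimodular class by citing \cite{KKR17} (legitimate, since Remark \ref{rem:ME-defi-comparison} identifies the two notions of measure equivalence for unimodular groups), and observe that non-unimodular groups are automatically non-discrete and non-compact; the paper instead reduces to non-discrete groups by replacing a discrete group $G$ with $G\times\rS^1$, treats compact groups by a short direct argument with measure preserving couplings, and then runs one uniform argument that also re-covers the non-compact unimodular case without appealing to the earlier paper. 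For the transfer step you use item (iii) of Theorem \ref{thm:me-and-soe-equivalence} together with the invariance of hyperfiniteness (equivalently, Connes--Feldman--Weiss amenability) of non-singular countable relations under stable orbit equivalence, whereas the paper uses item (ii) together with Lemma \ref{lem:OE-and-iso} to obtain $\Linfty(X)\rtimes G\simeq L(G\ltimes X)\simeq L(H\ltimes Y)\simeq \Linfty(Y)\rtimes H$ and transfers injectivity directly, so it never needs the SOE-invariance of hyperfiniteness as a separate ingredient; your alternative justification via stable isomorphism of $L(\R_{X_0})$ and $L(\R_{Y_0})$ is essentially that route in disguise. Your caution about the cross section relations failing to be pmp (Remark \ref{rem:mp-of-action-vs-relation}) is well placed and correctly resolved: Proposition \ref{prop:amenable-and-injective} and the Connes--Feldman--Weiss results it invokes are already stated in the non-singular generality.
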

\begin{proof}
By passing to a direct product with $\mathrm{S}^1$ if necessary, we may, without loss of generality, assume that the two groups are non-discrete.
If a compact group is ME with another group, then this group has to be unimodular
 by Corollary \ref{cor:unimod-vs-non-unimod} and hence
the two groups would admit a coupling $(\Omega,\eta, X,\mu,Y,\nu, i,j)$  in which the maps $i$ and $j$ are measure preserving (Remark \ref{rem:ME-defi-comparison}), and this can only be if
the other group is compact as well. Since compact groups are amenable, we may therefore also assume that neither group is compact.
If $G$ and $H$ are ME, Theorem \ref{thm:me-and-soe-equivalence} provide is with orbit equivalent, essentially free, ergodic, pmp actions $G\curvearrowright (X,\mu)$ and $H\curvearrowright (Y,\nu)$, and by picking cross sections $X_0\subset X$ and $ Y_0\subset Y$ we obtain from Proposition \ref{prop:amenable-and-injective} and Lemma \ref{lem:OE-and-iso} that
\[
\BB(\ell^2(\NN))\bar{\otimes} L(\R_{X_0})\simeq \Linfty(X)\rtimes G=L(G\ltimes X)\simeq L(H\ltimes Y)\simeq \BB(\ell^2(\NN))\bar{\otimes} L (\R_{Y_0}).
\]
Moreover, by Proposition \ref{prop:amenable-and-injective}, amenability of $G$ is equivalent with injectivity of $L(\R_{X_0})$ or, equivalently, with injectivity of $\BB(\ell^2(\NN))\bar{\otimes} L(\R_{X_0})$, and hence the statement follows.
\end{proof}

\begin{lem}\label{lem:same-range-implies-ME}
If $G$ and $H$ are non-unimodular, amenable, lcsc groups with $\overline{\Delta_G(G)}=\overline{\Delta_H(H)}$ then $G$ and $H$ are measure equivalent.
\end{lem}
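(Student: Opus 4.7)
The plan is to reduce the statement to Theorem~\ref{thm:me-and-soe-equivalence} by exhibiting essentially free, ergodic, pmp actions of $G$ and $H$ whose cross section equivalence relations are stably orbit equivalent. Every discrete lcsc group and every compact lcsc group is unimodular, so the non-unimodularity hypothesis automatically forces both $G$ and $H$ to be non-discrete and non-compact, and Theorem~\ref{thm:me-and-soe-equivalence} is therefore available.

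First I would pick free, strongly mixing pmp actions $G\curvearrowright(X,\mu)$ and $H\curvearrowright(Y,\nu)$, as provided by \cite[Remark 1.1]{KPV}, choose cross sections $X_0\subset X$ and $Y_0\subset Y$, and form the associated countable equivalence relations $\R_{X_0}$ and $\R_{Y_0}$. By Proposition~\ref{prop:amenable-and-injective} these relations are hyperfinite, and the associated von Neumann algebras $L(\R_{X_0})$ and $L(\R_{Y_0})$ are injective factors. Since $G$ is non-unimodular and non-compact, the kernel $G_0:=\ker(\Delta_G)$ is a closed, normal, non-compact subgroup of $G$ (cf.~\cite[(38.26)]{hewitt-ross}), so the restricted action of $G_0$ on $(X,\mu)$ is still mixing and in particular ergodic, and analogously for $H_0:=\ker(\Delta_H)$ on $(Y,\nu)$. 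The $\rS$-invariant computation spelled out in Remark~\ref{rem:mp-of-action-vs-relation} then applies and gives
\[
\rS(L(\R_{X_0}))\;=\;\overline{\Delta_G(G)}\;=\;\overline{\Delta_H(H)}\;=\;\rS(L(\R_{Y_0})),
\]
where the middle equality is the hypothesis of the lemma.

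Next I would observe that $\Delta_G(G)$ is a nontrivial subgroup of $\RR_+^*$, so its closure in $[0,\infty[$ is either of the form $\{0\}\cup \lambda^{\ZZ}$ for some $0<\lambda<1$ or equals $[0,\infty[$; hence $L(\R_{X_0})$ and $L(\R_{Y_0})$ are injective factors which are simultaneously of type III$_\lambda$ or simultaneously of type III$_1$. In either case the $\rS$-invariant determines the flow of weights (periodic of period $|\log\lambda|$, respectively trivial), so by Connes' classification of injective type III$_\lambda$ factors and Haagerup's solution of the bicentralizer problem for the type III$_1$ case, the two factors $L(\R_{X_0})$ and $L(\R_{Y_0})$ have isomorphic flows of weights. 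Krieger's uniqueness theorem for ergodic amenable non-singular countable equivalence relations then yields that $\R_{X_0}$ and $\R_{Y_0}$ are orbit equivalent, hence also stably orbit equivalent, and an application of Theorem~\ref{thm:me-and-soe-equivalence} finally delivers the desired measure equivalence of $G$ and $H$.

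The main technical point I anticipate is ensuring that the $\rS$-invariant alone suffices to identify the flow of weights, which is where the restriction to non-unimodular groups becomes essential: closed subgroups of $\RR_+^*$ are either trivial, of the form $\lambda^{\ZZ}$, or all of $\RR_+^*$, so the pathological type III$_0$ range $\{0,1\}$ never occurs as $\overline{\Delta_G(G)}$, and the Connes--Haagerup classification applies cleanly.
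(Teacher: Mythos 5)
Your argument is correct and follows the paper's strategy almost verbatim up to the key computation: mixing pmp actions, cross sections, the identity $\rS(L(\R_{X_0}))=\overline{\Delta_G(G)}$ via ergodicity of $\ker(\Delta_G)\curvearrowright X$, and the observation that non-unimodularity rules out the types $\mathrm{II}$ and $\mathrm{III}_0$, so both factors are injective of type $\mathrm{III}_\lambda$ for the same $\lambda\in\left]0,1\right]$. Where you genuinely diverge is the endgame. The paper stays at the level of von Neumann algebras: it invokes the Connes--Haagerup classification of injective factors to get $L(\R_{X_0})\simeq L(\R_{Y_0})$, then uses \cite[Corollary 11]{Connes-Feldman-Weiss} to adjust the isomorphism so that it carries $\Linfty(X_0)$ onto $\Linfty(Y_0)$, and finally \cite[Theorem 1]{FM2} to convert this Cartan-preserving isomorphism into an orbit equivalence of $\R_{X_0}$ and $\R_{Y_0}$. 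You instead pass directly to Krieger's classification of ergodic amenable (equivalently, by Connes--Feldman--Weiss, hyperfinite) non-singular equivalence relations by their associated flows; since for type $\mathrm{III}_\lambda$ with $\lambda\in\left]0,1\right]$ the associated flow is determined by $\lambda$ alone (periodic of period $|\log\lambda|$, or trivial), the two relations are orbit equivalent. This is a legitimate alternative that avoids the Cartan-preservation step entirely; note, though, that your appeal to the Connes--Haagerup factor classification to deduce ``isomorphic flows of weights'' is redundant in this route --- the flow is read off from $\lambda$ directly, and the deep classification input is then carried by Krieger's theorem rather than by the factor classification. Both routes ultimately rest on comparably heavy machinery, so neither is strictly more elementary; the paper's choice has the advantage of reusing exactly the references already set up in Section \ref{sec:vNa}.
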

\begin{proof}

Pick free, (strongly) mixing (cf.~\cite{klaus-schmidt}), pmp actions $G\curvearrowright (X,\mu)$ and $H\curvearrowright (Y,\nu)$; cf. \cite[Remark 1.1]{KPV} for the existence of such. Then any non-compact subgroup of  either group  also acts ergodically and  the centralizer of the weight $\tilde{\varphi}$ on $\Linfty(X)\rtimes G$ dual to $\varphi:=\int - \d\mu$ is isomorphic to $\Linfty(X)\rtimes \ker(\Delta_G)$ (see Section \ref{sec:vNa}), which is therefore a factor. 
Hence
\[
\S(\Linfty(X)\rtimes G)=\Sp(\Delta_{\tilde{\varphi}})=\overline{\Delta(G)}.
\]
Choosing a cross section $X_0\subset X$ we have $\Linfty(X)\rtimes G\simeq \BB(\ell^2(\NN))\bar{\otimes} L(\R_{X_0})$, and  $L(\R_{X_0})$ is therefore a factor as well,  and by
 \cite[Corollaire 3.2.8]{connes-type-III-classification} we have
\[
\S(L(\R_{X_0}))=\S(\BB(\ell^2(\NN))\bar{\otimes} L(\R_{X_0}))=\S(\Linfty(X)\rtimes G)=\overline{\Delta_G(G)}.
\]
Choosing a cross section $Y_0\subset Y$ for the $H$-action, we similarly obtain that $\S(L(\R_{X_0}))=\overline{\Delta_H(H)}$ and hence, by amenability of $G$ and $H$, that $L(\R_{X_0})$ and $L(\R_{Y_0})$ are injective factors (see Proposition \ref{prop:amenable-and-injective}) with the same $\S$-invariant. Moreover, since $G$ and $H$ are non-unimodular the common $\S$-invariant is neither $\{0\}$ nor $\{0,1\}$, and the two factors are therefore both type III$_\lambda$ for some $\lambda \in ]0,1]$.
 By the classification of injective factors \cite{con76, connes-type-III-classification, uffe-bicentralizer} we therefore conclude that $L(\R_{X_0})\simeq L(\R_{Y_0})$. Moreover, \cite[Corollary 11]{Connes-Feldman-Weiss} shows that the isomorphism may be composed with an automorphism of $L(\R_{Y_0})$ such that the resulting isomorphism maps $\Linfty(X_0)$ onto $\Linfty(Y_0)$, and  $\R_{X_0}$ and $\R_{Y_0}$ now follow  orbit equivalent by \cite[Theorem 1]{FM2}.
By Theorem \ref{thm:me-and-soe-equivalence} we therefore obtain that $G$ and $H$ are ME, as desired.
\end{proof}
The following result extends \cite[Theorem 4.1]{KKR17}, and in turn the classical results for discrete groups \cite{Dye-mp-trans-I, ornstein-weiss, Connes-Feldman-Weiss}, completing the classification of amenable, lcsc groups up to measure equivalence.

\begin{thm}\label{thm:ME-classification-of-amenable}
The class of amenable lcsc groups is stable under measure equivalence and consists of the following three measure equivalence classes: compact groups, non-compact unimodular amenable groups, and non-unimodular amenable groups.
\end{thm}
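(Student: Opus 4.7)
My plan is to prove the theorem in two stages: first, stability of amenability under ME, and second, the partition into three ME classes. Stability is exactly Proposition \ref{prop:amenability-transfers}. For the partition I verify pairwise distinctness of the three candidate classes, and then that each forms a single ME class.

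For pairwise distinctness, Corollary \ref{cor:unimod-vs-non-unimod} separates unimodular from non-unimodular amenable groups. To separate compact from non-compact within the unimodular amenable class, suppose $G$ is compact and ME to an amenable group $H$; then $H$ is unimodular by Corollary \ref{cor:unimod-vs-non-unimod}. By Proposition \ref{prop:strict-free-couplings} I pass to a strict, free, ergodic coupling $(\Omega, \eta)$, and since both groups are unimodular, $[\eta]$ contains a $(G \times H)$-invariant measure $\eta'$ (as used in \cite{KKR17}), unique up to positive scalar by ergodicity. Expressing $\eta'$ through $i$ and $j$ yields $\eta' = \lambda_G \times \nu'$ and $\eta' = \lambda_H \times \mu'$ (up to scalar) for an $H$-invariant finite $\nu'$ on $Y$ and a $G$-invariant finite $\mu'$ on $X$; the former has total mass $\lambda_G(G)\nu'(Y) < \infty$, which must match $\lambda_H(H)\mu'(X)$ and so forces $\lambda_H(H) < \infty$, i.e., $H$ compact.

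For each class being a single ME class: any two compact lcsc groups are ME via the strict coupling $(\Omega, \eta) := (G \times H, \lambda_G \times \lambda_H)$, since both Haar measures are finite. The non-compact unimodular amenable case is \cite[Theorem 4.1]{KKR17}. The non-unimodular amenable case is the substantive one: Lemma \ref{lem:same-range-implies-ME} establishes ME whenever $\overline{\Delta_G(G)} = \overline{\Delta_H(H)}$, using the classification of injective type III$_\lambda$ factors \cite{con76, connes-type-III-classification, uffe-bicentralizer} together with the uniqueness of Cartan subalgebras in injective factors \cite[Corollary 11]{Connes-Feldman-Weiss}.

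The main obstacle is closing the non-unimodular amenable case across distinct modular ranges. My approach is to show that any non-unimodular amenable lcsc group $G$ is ME to one whose modular function has dense image in $\RR_+^*$ (for instance the affine group $\RR \rtimes \RR_+^*$). This would be achieved by constructing a closed cocompact embedding $G \leq G'$ in a non-unimodular amenable lcsc group $G'$ with $\Delta_{G'}\restriction_G = \Delta_G$ and $\overline{\Delta_{G'}(G')} = \RR_+^*$; such an embedding yields $G \sim_{\mathrm{ME}} G'$ by Remark \ref{rem:ME-defi-comparison}, and Lemma \ref{lem:same-range-implies-ME} then matches $G'$ with any reference group of the same range. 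Producing such a $G'$ for arbitrary $G$—particularly when the defining extension $1 \to \ker(\Delta_G) \to G \to \Delta_G(G) \to 1$ is non-split—is the technical heart of the argument.
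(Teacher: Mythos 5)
Your skeleton matches the paper's up to the last step, and everything you do prove is fine: stability via Proposition \ref{prop:amenability-transfers}, separation of the three classes via Corollary \ref{cor:unimod-vs-non-unimod} and the finiteness-of-mass argument for compact versus non-compact, the product coupling for compact groups, \cite[Theorem 4.1]{KKR17} for the non-compact unimodular case, and Lemma \ref{lem:same-range-implies-ME} for non-unimodular amenable groups with equal modular range. But the step you flag as ``the technical heart'' is exactly the part that is missing, and the route you propose for it is the wrong one. You want to embed an \emph{arbitrary} non-unimodular amenable $G$ as a closed cocompact subgroup of some $G'$ with $\Delta_{G'}\restriction_G=\Delta_G$ and $\overline{\Delta_{G'}(G')}=\RR_+^*$. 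There is no evident general construction: a direct product $G\times A$ fails because cocompactness forces $A$ compact, hence unimodular, so the modular range does not grow; any genuine enlargement would have to be a twisted extension adapted to the (possibly non-split) structure of $G$, and you give no construction. As it stands the proof is incomplete.

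The missing idea is an indirection that avoids touching $G$ at all. Since $\overline{\Delta_G(G)}$ is a closed subgroup of $\RR_+^*\cong\RR$, either $\Delta_G(G)$ is dense or $\Delta_G(G)=S:=s_0^{\ZZ}$ for some $s_0\neq 1$. In the second case, apply Lemma \ref{lem:same-range-implies-ME} not to compare $G$ with another given group, but to trade $G$ for the explicit model $H=\RR\rtimes S$, realized as the subgroup of the $ax+b$-group with dilation part in $S$: it is amenable, non-unimodular, and has $\Delta_H(H)=S=\Delta_G(G)$, so $G\sim_{\mathrm{ME}}H$ by the lemma. Now it is the \emph{model} $H$, not $G$, that sits as a closed cocompact subgroup of the full $ax+b$-group $\tilde H=\RR\rtimes\RR_+^*$ with $\Delta_{\tilde H}\restriction_H=\Delta_H$, so $H\sim_{\mathrm{ME}}\tilde H$ by the finite-covolume mechanism of Remark \ref{rem:ME-defi-comparison}, and $\tilde H$ has full modular image. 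Transitivity then reduces everything to the dense-range case, which Lemma \ref{lem:same-range-implies-ME} already covers. With this substitution your argument closes; without it, the non-unimodular class is not shown to be a single ME class.
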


\begin{proof}
The stability statement is exactly the contents of Proposition \ref{prop:amenability-transfers}.
Clearly all compact groups are mutually ME as one may simply use their product as an ME coupling, and if a compact group $K$  were ME to a non-compact group $G$, then $G$ would be unimodular by Corollary \ref{cor:unimod-vs-non-unimod}, and hence $G$ and $K$ would admit a coupling $(\Omega,\eta, X,\mu,Y,\nu, i,j)$  in which the maps $i$ and $j$ are measure preserving (Remark \ref{rem:ME-defi-comparison}), but this cannot be since $\lambda_K\times \mu$ is finite and $\lambda_G\times \nu$ is infinite. The compact groups thus form one ME class. By \cite[Theorem 4.1]{KKR17} all non-compact, unimodular, amenable lcsc groups are pairwise ME which, together with Corollary \ref{cor:unimod-vs-non-unimod} and Proposition \ref{prop:amenability-transfers},  shows that the non-compact, unimodular, amenable lcsc groups form another ME class. 
We therefore only need to prove that all non-unimodular, amenable  groups are pairwise ME.
If $G$ and $H$ are lcsc amenable groups and both $\Delta_G(G)$ and $\Delta_H(H)$ are dense in $\RR_+$ then $G$ and $H$ follow ME by Lemma
\ref{lem:same-range-implies-ME}. If $G$ is non-unimodular and $\Delta_G(G)$ is not dense in $\RR_+$ then there exists $s_0\in \RR_+$ such that $\Delta_G(G)=\{s_0^n \mid n\in \ZZ\}=:S$, and since $G$ is non-unimodular we have $s_0\neq 1$. Consider now the subgroup of the $ax+b$-group given by 
\[
H=\left\{
\begin{bmatrix}
s & b\\
0 & 1
\end{bmatrix} \in \MM_2(\RR) \Big\vert \ b\in \RR, s\in S\right\}.
\]
It is easy to see 
that $\Delta_{H}\left(\begin{bmatrix}
s & b\\
0 & 1
\end{bmatrix}\right)=s^{-1}$ so that $\Delta_H(H)=S$ 
and  $H\simeq \RR\rtimes S$  is amenable, being an extension of the amenable groups $\RR$ and $S$; hence $G$ and $H$ are measure equivalent by Lemma \ref{lem:same-range-implies-ME}. Since $s_0\neq 1$, $H$ is a cocompact subgroup of the full $ax+b$-group
\[
\tilde{H}:=\left\{
\begin{bmatrix}
s & b\\
0 & 1
\end{bmatrix} \in \MM_2(\RR) \Big\vert \  b\in \RR, s\in \RR_{+}\right\}
\] 
and since $\Delta_{\tilde{H}}\left(\begin{bmatrix}
s & b\\
0 & 1
\end{bmatrix}\right)=s^{-1}$ (\cite[Example A.3.5]{BHV}) we have $\Delta_{\tilde{H}}\restriction_H=\Delta_H$ and hence $H$ and $\tilde{H}$ are ME as well; see Remark \ref{rem:ME-defi-comparison}. The group $G$ is therefore ME to a group whose modular function has dense (actually full) image, and we conclude from that  that all non-unimodular amenable groups are pairwise ME.
\end{proof}

\nocite{*}

\def\cprime{$'$} \def\cprime{$'$}

\end{document}